\numberwithin{equation}{section}
\newtheorem{theorem}[equation]{Theorem}
\newtheorem*{theorem*}{Theorem} \newtheorem{lemma}[equation]{Lemma}
\newtheorem*{conjecture*}{Mamma Conjecture}
\newtheorem*{conjecture1*}{Mamma Conjecture (revisited)}
\newtheorem{proposition}[equation]{Proposition}
\newtheorem{corollary}[equation]{Corollary}
\newtheorem*{corollary*}{Corollary}
\theoremstyle{remark}
\newtheorem{example}[equation]{Example}
\newtheorem{notation}[equation]{Notation}
\theoremstyle{remark}
\newtheorem{remark}[equation]{Remark}
\newcommand{\cA}{{\mathcal A}}
\newcommand{\cB}{{\mathcal B}}
\newcommand{\cC}{{\mathcal C}}
\newcommand{\cD}{{\mathcal D}}
\newcommand{\cE}{{\mathcal E}}
\newcommand{\cG}{{\mathcal G}}
\newcommand{\cL}{{\mathcal L}}
\newcommand{\cO}{{\mathcal O}}
\newcommand{\cT}{{\mathcal T}}
\newcommand{\Spt}{\mathrm{Spt}}
\newcommand{\bbF}{\mathbb{F}}
\newcommand{\bbN}{\mathbb{N}}
\newcommand{\bbP}{\mathbb{P}}
\newcommand{\bbS}{\mathbb{S}}
\newcommand{\bbZ}{\mathbb{Z}}
\DeclareMathOperator{\id}{id}
\DeclareMathOperator{\NMot}{NMot}
\newcommand{\dgcat}{\mathrm{dgcat}} 
\newcommand{\bbK}{I\mspace{-6.mu}K}
\newcommand{\dg}{\mathrm{dg}}
\newcommand{\Hom}{\mathrm{Hom}}
\newcommand{\Ho}{\mathrm{Ho}}
\newcommand{\too}{\longrightarrow}
\newcommand{\ie}{\textsl{i.e.}\ }
\newcommand{\eg}{\textsl{e.g.}}
\let\oldmarginpar\marginpar
\def\marginpar#1{\oldmarginpar{\tiny #1}}
\begin{document}

\title[Invariants of noncommutative projective schemes]{Invariants of noncommutative projective schemes}
\author{Gon{\c c}alo~Tabuada}

\address{Gon{\c c}alo Tabuada, Department of Mathematics, MIT, Cambridge, MA 02139, USA}
\email{tabuada@math.mit.edu}
\urladdr{http://math.mit.edu/~tabuada}
\thanks{The author was partially supported by a NSF CAREER Award}

\subjclass[2010]{14A22, 14N05, 16S37, 19D55, 19E08}
\date{\today}

\keywords{Noncommutative algebraic geometry, projective geometry, Koszul duality, algebraic $K$-theory, cyclic homology and its variants, topological Hochschild homology.}
\abstract{In this note we compute several invariants (\eg\ algebraic $K$-theory, cyclic homology and topological Hochschild homology) of the noncommutative projective schemes associated to Koszul algebras of finite global~dimension.}}

\maketitle
\vskip-\baselineskip
\vskip-\baselineskip


\section{Introduction}\label{sec:intro}
\subsection*{Noncommutative projective schemes}
Let $k$ be a field and $A=\bigoplus_{n\geq 0} A_n$ a $\bbN$-graded Noetherian $k$-algebra. Throughout the note, we will always assume that $A$ is {\em connected}, \ie $A_0=k$, and {\em locally finite-dimensional}, \ie $\mathrm{dim}_k(A_n)<\infty$ for every $n$. Following Manin \cite{Manin}, Gabriel \cite{Gabriel}, Artin-Zhang \cite{Artin-Zhang}, and others, the {\em noncommutative projective scheme $\mathrm{qgr}(A)$} associated to $A$ is defined as the quotient category $\mathrm{gr}(A)/\mathrm{tors}(A)$, where $\mathrm{gr}(A)$ stands for the abelian category of finitely generated $\bbZ$-graded (right) $A$-modules and $\mathrm{tors}(A)$ for the Serre subcategory of torsion $A$-modules. This definition was motivated by Serre's celebrated result \cite[Prop.~7.8]{Serre}, which asserts that in the particular case where $A$ is commutative and generated by elements of degree $1$ the quotient category $\mathrm{qgr}(A)$ is equivalent to the abelian category of coherent $\cO_{\mathrm{Proj}(A)}$-modules $\mathrm{coh}(\mathrm{Proj}(A))$. For example, when $A$ is the polynomial $k$-algebra $k[x_1, \ldots, x_d]$, with $\mathrm{deg}(x_i)=1$, we have the following equivalence $\mathrm{qgr}(k[x_1, \ldots, x_d])\simeq \mathrm{coh}(\bbP^{d-1})$.
\subsection*{Invariants of dg categories}
A {\em dg category} $\cA$ is a category enriched over complexes of $k$-vector spaces; consult Keller's survey \cite{ICM-Keller}. Every (dg) $k$-algebra $B$ gives naturally rise to a dg category with a single object. Another source of examples is provided by exact categories since the bounded derived category $\cD^b(\cE)$ of every exact category $\cE$ admits a canonical dg enhancement $\cD^b_\dg(\cE)$; see \cite[\S4.4]{ICM-Keller}. In what follows, we will denote by $\dgcat(k)$ the category of dg categories and dg functors. A functor $E\colon\dgcat(k) \to \cT$, with values in a triangulated category,~is~called:
\begin{itemize}
\item[(i)] {\em Morita invariant} if it inverts the Morita equivalences; see \cite[\S4.6]{ICM-Keller}.
\item[(ii)] {\em Localizing} if it sends short exact sequences of dg categories, in the sense of Drinfeld/Keller (see \cite{Drinfeld}\cite[\S4.6]{ICM-Keller}), to distinguished triangles:
\begin{eqnarray*}
0 \too \cA \too \cB \too \cC \too 0 &\mapsto & E(\cA) \too E(\cB) \too E(\cC) \stackrel{\partial}{\too} \Sigma E(\cA)\,.
\end{eqnarray*}
\item[(iii)] {\em Co-continuous} if it preserves sequential (homotopy) colimits.\end{itemize}
Examples of functors satisfying the conditions (i)-(iii) include nonconnective algebraic $K$-theory $\bbK$, homotopy $K$-theory $KH$, \'etale $K$-theory $K^{\mathrm{et}}$, the mixed complex $C$, Hochschild homology $HH$, cyclic homology $HC$, and topological Hochschild homology $THH$; see \cite[\S8.2]{book}. Some other functors such as periodic cyclic homology $HP$ and negative cyclic homology $HN$ only satisfy conditions (i)-(ii). When applied to $B$, resp. to  $\cD^b_\dg(\cE)$, all the preceding invariants of dg categories reduce to the corresponding invariants of the (dg) $k$-algebra $B$, resp. of the exact category~$\cE$.
\begin{notation}
Given a functor $E\colon \dgcat(k) \to \cT$, an object $o \in \cT$, an integer $m \in \bbZ$, and a dg category $\cA$, let us write $E_m^o(\cA) := \Hom_\cT(\Sigma^m(o), E(\cA))$. Whenever $\cT$ is symmetric monoidal with $\otimes$-unit ${\bf 1}$, we will write $E_m(\cA)$ instead of $E_m^{{\bf 1}}(\cA)$.
\end{notation}
\subsection*{Statement of results}
Let $k$ be a field and $A=\bigoplus_{n\geq 0} A_n$ a $\bbN$-graded Noetherian $k$-algebra. Assume that $A$ is Koszul and has finite global dimension $d$. Under these assumptions, the Hilbert series $h_A(t):=\sum_{n\geq 0} \mathrm{dim}_k(A_n)t^n \in \bbZ[\![t]\!]$ is invertible and its inverse $h_A(t)^{-1}$ is a polynomial $1-\beta_1t + \beta_2t^2 - \cdots + (-1)^d\beta_d t^d$ of degree $d$, where $\beta_i$ stands for the dimension of the $k$-vector space $\mathrm{Tor}^A_i(k,k)$ (or $\mathrm{Ext}_A^i(k,k)$). In what follows, we write $\beta:=\beta_d$. Our main result is the following computation: 
\begin{theorem}\label{thm:main}
Let $A$ be a $k$-algebra as above and $E\colon \dgcat(k) \to \cT$ a functor satisfying conditions (i)-(iii). Assume that $\cT$ is $R$-linear for a commutative~ring~$R$.
\begin{itemize}
\item[(i)] For every compact object $o \in \cT$, we have $R$-module isomorphisms
\begin{eqnarray}
E^o_m(\cD^b_\dg(\mathrm{qgr}(A)))\simeq R[t]/\langle h'_A(t)^{-1}\rangle \otimes_R E^o_m(k) && m \in \bbZ\,,
\end{eqnarray}
where $h'_A(t)^{-1}=1-\beta'_1t + \beta'_2t^2 - \cdots + (-1)^{d'}\beta'_{d'} t^{d'}$ stands for the image of the polynomial $h_A(t)^{-1}$ in $R[t]$. In what follows, we write $\beta':=\beta'_{d'}$.
\item[(ii)] Assume moreover that $1/\beta' \in R$ and that $\cT$ is compactly generated. Under these assumptions, we have an isomorphism $E(\cD^b_\dg(\mathrm{qgr}(A)))\simeq E(k)^{\oplus d'}$.
\end{itemize}
\end{theorem}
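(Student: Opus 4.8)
The plan is to present $\cQ:=\cD^b_\dg(\mathrm{qgr}(A))$ as a quotient of $\cG:=\cD^b_\dg(\mathrm{gr}(A))$ by $\cK:=\cD^b_\dg(\mathrm{tors}(A))$, apply $E$ to obtain a distinguished triangle in $\cT$, and then compute the two outer terms using Koszul duality and $\mathrm{gl.dim}(A)<\infty$. Since $A$ is Noetherian, the Serre quotient $\mathrm{qgr}(A)=\mathrm{gr}(A)/\mathrm{tors}(A)$ of Gabriel/Artin--Zhang lifts---via the dg quotient/dg enhancement formalism---to a short exact sequence of dg categories $\cK\too\cG\too\cQ$ in the sense of Drinfeld/Keller; concretely, $\cD^b(\mathrm{tors}(A))$ identifies with the full triangulated subcategory of $\cD^b(\mathrm{gr}(A))$ spanned by the complexes with torsion cohomology, and the Verdier quotient by it is $\cD^b(\mathrm{qgr}(A))$. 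Applying $E$ (condition~(ii)) produces a distinguished triangle
\[
E(\cK)\xrightarrow{\ \iota_*\ }E(\cG)\too E(\cQ)\too\Sigma E(\cK)\,,
\]
and the twist autoequivalence $M\mapsto M(-1)$ endows $E^o_m(\cK)$, $E^o_m(\cG)$, $E^o_m(\cQ)$ with $R[t^{\pm1}]$-module structures for which $\iota_*$ is $R[t^{\pm1}]$-linear.

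Next I would compute the outer terms; the claim is that $E(\cG)\simeq E(\cK)\simeq\bigoplus_{n\in\bbZ}E(k)$, with the twist acting by shifting the index. Since $\mathrm{gl.dim}(A)=d<\infty$, every object of $\cD^b(\mathrm{gr}(A))$ has a finite resolution by finite sums of the twists $A(n)$ (graded Nakayama lemma), so $\cG\simeq\perf_\dg(\uA)$, where $\uA$ is the $k$-linear category with object set $\bbZ$ and $\uA(n,m)=A_{m-n}$. Dually---and this is where Koszulity is used---the $k(n)$ generate $\cD^b(\mathrm{tors}(A))$, and $\RHom_{\cD^b(\mathrm{gr}(A))}(k(n),k(m))=\RHom_A(k,k(m-n))$ is, by Koszulity, concentrated in the single cohomological degree $m-n$ with total dimension $\dim_k\mathrm{Ext}^{m-n}_A(k,k)$ (which vanishes unless $0\leq m-n\leq d$); hence derived Morita theory gives $\cK\simeq\perf_\dg(\uB)$ for the dg category $\uB$ with object set $\bbZ$ and $\uB(n,m)=\RHom_A(k,k(m-n))$. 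Both $\uA$ and $\uB$ are directed (hom-objects vanish for $m<n$, and equal $k$ on the diagonal) and locally finite, hence are the filtered colimits of their full dg subcategories $\uA_{[-N,N]}$, $\uB_{[-N,N]}$ on the objects $-N,\dots,N$; the latter are finite directed dg categories, so the representable modules form a full exceptional collection and $\perf_\dg$ of each decomposes semiorthogonally into $2N{+}1$ copies of $\perf(k)$, giving $E(\uA_{[-N,N]})\simeq E(\uB_{[-N,N]})\simeq\bigoplus_{n=-N}^{N}E(k)$ by additivity of localizing invariants. Morita invariance (condition~(i)) and co-continuity (condition~(iii)) then yield $E(\cG)\simeq E(\cK)\simeq\bigoplus_{n\in\bbZ}E(k)$; applying $E^o_m$ with $o$ compact (so that $\Hom_\cT(\Sigma^m o,-)$ commutes with the coproduct), this becomes $E^o_m(k)\otimes_R R[t^{\pm1}]$, free of rank one over $R[t^{\pm1}]$.

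Now identify $\iota_*$. Tracking the generator $k(0)$ through its resolution $0\to A(-d)^{\beta_d}\to\cdots\to A(-1)^{\beta_1}\to A\to k\to0$---finite by $\mathrm{gl.dim}(A)<\infty$ and linear by Koszulity---additivity of $E$ shows that, under the identifications above, $\iota_*$ is multiplication by $h'_A(t)^{-1}=1-\beta'_1t+\cdots+(-1)^{d'}\beta't^{d'}$ on $E^o_m(k)[t^{\pm1}]$. As $h'_A(t)^{-1}$ has constant term $1$, it is a non-zero-divisor in $R[t^{\pm1}]$ and multiplication by it is injective on $E^o_m(k)[t^{\pm1}]$ (compare lowest-degree coefficients); since $t$ is moreover invertible modulo $h'_A(t)^{-1}$, one has $R[t^{\pm1}]/\langle h'_A(t)^{-1}\rangle=R[t]/\langle h'_A(t)^{-1}\rangle$. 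Hence the long exact sequence of the triangle breaks into short exact sequences $0\to E^o_m(k)[t^{\pm1}]\xrightarrow{\,\cdot\,h'_A(t)^{-1}\,}E^o_m(k)[t^{\pm1}]\to E^o_m(\cQ)\to0$, proving part~(i), compatibly with the $R[t]/\langle h'_A(t)^{-1}\rangle$-module structures. For part~(ii): if $1/\beta'\in R$ then $h'_A(t)^{-1}$ has invertible leading coefficient, so $R[t]/\langle h'_A(t)^{-1}\rangle$ is $R$-free of rank $d'$ and the algebraic sequence $0\to R[t^{\pm1}]\xrightarrow{\,\cdot\,h'_A(t)^{-1}\,}R[t^{\pm1}]\to R[t]/\langle h'_A(t)^{-1}\rangle\to0$ splits; tensoring this split sequence with the object $E(k)\in\cT$ and comparing with the distinguished triangle identifies $E(\cQ)$, the cofiber of $\iota_*$, with $E(k)^{\oplus d'}$---equivalently, part~(i) shows the comparison map $E(k)^{\oplus d'}\to E(\cQ)$ induces isomorphisms on all $E^o_m$ with $o$ compact, hence is an isomorphism because $\cT$ is compactly generated.

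The crux is the second paragraph: one must genuinely use that $A$ is Koszul, so that $\mathrm{Ext}^*_A(k,k)$ is linear and $\cK$ is governed by a \emph{directed} dg category, and that $\mathrm{gl.dim}(A)<\infty$, so that this directed category is locally finite and $\cG=\perf_\dg(\uA)$---without both, neither the colimit presentation of the outer terms nor the finiteness of the resolution of $k$ is available, and the argument stalls. Everything after the outer terms and the twist action are pinned down is the elementary homological algebra of Laurent-polynomial modules carried out above. (When $\mathrm{qgr}(A)$ is moreover known to admit a tilting object whose endomorphism ring is a directed finite-dimensional $k$-algebra of finite global dimension---e.g. when $A$ is Artin--Schelter regular---one may instead read off $E(\cQ)$ from a Morita equivalence $\cQ\simeq\perf_\dg(\Gamma)$, but this shortcut does not by itself exhibit the dependence on $R$ in part~(i).)
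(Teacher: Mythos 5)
Your argument is correct and follows the same overall skeleton as the paper's proof -- the localization triangle coming from $0 \to \cD^b_\dg(\mathrm{tors}(A)) \to \cD^b_\dg(\mathrm{gr}(A)) \to \cD^b_\dg(\mathrm{qgr}(A)) \to 0$, the identification of both outer terms with $\oplus_{\bbZ}E(k)$, the recognition of the first map as multiplication by $h'_A(t)^{-1}$ on $R[t,t^{-1}]\otimes_R E^o_m(k)$, and the concluding Laurent-polynomial algebra -- but you implement the two key lemmas by genuinely different means. For the outer terms, the paper proves $E(\cD^b_\dg(\mathrm{grproj}(B)))\simeq \oplus_{\bbZ}E(B_0)$ via the base-change functors $\varphi,\psi$ and a filtration argument (Proposition \ref{prop:pgr} and Lemma \ref{lem:aux}, resting on the additivity statement of Proposition \ref{prop:additive}), and then handles $\mathrm{tors}(A)$ by invoking the Beilinson--Ginzburg--Soergel Koszul duality equivalence to reduce to $\mathrm{grproj}(A^!)$; you instead treat both sides uniformly as $\perf_\dg$ of a directed, locally finite $\bbZ$-indexed dg category, exhaust it by finite full subcategories carrying full exceptional collections, and conclude by additivity plus co-continuity. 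This is legitimate and makes visible exactly where Koszulity enters (the purity of $\mathrm{Ext}^\ast_A(k,k(j))$, so that your $\uB$ is directed with $k$ on the diagonal -- it is of course $\mathrm{grproj}(A^!)$ in disguise). For the connecting map, the paper reduces to a $K_0$-computation via the universal property of $\NMot(k)$, where maps between sums of $U(k)$ are literally integer matrices; you identify $\iota_\ast$ summand by summand using additivity applied to the linear resolution of $k$, which works provided you use additivity as a \emph{morphism-level} identity in $\cT$ -- precisely what Proposition \ref{prop:additive} supplies -- rather than a statement about classes. Two places where you are terser than the paper and some real work is required: first, the exactness of $\cD^b(\mathrm{tors}(A))\to\cD^b(\mathrm{gr}(A))\to\cD^b(\mathrm{qgr}(A))$ in the sense of Verdier is not a formal consequence of the Serre-quotient description; the paper verifies Keller's criterion by exhibiting, for each extension of $N$ by a torsion module $L$, a map to an extension of torsion modules (using $M\to M/M_{\geq m}$ for $m\gg 0$). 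Second, in part (ii) the splitting of the infinite-matrix map $\cdot\, h'_A(t)^{-1}$ at the level of $\cT$ is delicate, but your alternative argument -- checking invertibility on $E^o_m$ for compact $o$ and using compact generation -- is exactly the paper's and closes the point.
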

\begin{remark}\label{rk:new}
\begin{itemize}
\item[(i)] If $\beta=1$, then $\beta'=\beta$ and $d'=d$. As proved in \cite[Cor.~0.2]{Zhang}, in the particular case where $d=3$, we always have $h_A(t)^{-1}=(1-t)^3$.
\item[(ii)] If $R$ is a field, then $1/\beta' \in R$. Moreover, $\beta'=\beta$ and $d'=d$ if and only if the characteristic of $R$ does not divides $\beta$.
\end{itemize}
\end{remark}
\begin{corollary}\label{cor:main}
Let $A$ be a $k$-algebra as above and $E\colon \dgcat(k) \to \cT$ a functor satisfying conditions (i)-(iii). Assume moreover that $\cT$ is compactly generated. Under these assumptions, we have an isomorphism $E(\cD^b_\dg(\mathrm{qgr}(A)))_{1/\beta'}\simeq E(k)_{1/\beta'}^{\oplus d'}$ in the $\bbZ[1/\beta']$-linearized triangulated category\footnote{Let $\cG$ be a set of compact generators of $\cT$. Recall that $\cT_{1/\beta'}$ may be defined as the Verdier quotient of $\cT$ by the smallest localizing (=closed under arbitrary direct sums) triangulated subcategory containing the objects $\{\mathrm{cone}(\beta\cdot  \id_o)\,|\, o \in \cG\}$.} $\cT_{1/\beta'}$.
\end{corollary}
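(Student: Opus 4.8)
The plan is to deduce the corollary from Theorem~\ref{thm:main}(ii), applied not to $E$ itself but to the composite of $E$ with the localization functor onto $\cT_{1/\beta'}$. Fix a set $\cG$ of compact generators of $\cT$, set $R:=\bbZ[1/\beta']$, and let $L\colon\cT\to\cT_{1/\beta'}$ be the Verdier quotient functor, where, as in the footnote, $\cT_{1/\beta'}$ is the quotient of $\cT$ by the localizing subcategory $\cS$ generated by $\{\mathrm{cone}(\beta\cdot\id_o)\mid o\in\cG\}$. The first thing I would record is that $\cS$ is a localizing subcategory generated by a \emph{set of compact objects}, since each $\mathrm{cone}(\beta\cdot\id_o)$ is the cone of a morphism between compact objects, hence compact. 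By Neeman's localization theorem it follows that $\cT_{1/\beta'}$ is again compactly generated and that $L$ preserves arbitrary coproducts. Moreover, unwinding the defining triangles $o\stackrel{\beta}{\to}o\to\mathrm{cone}(\beta\cdot\id_o)$ shows that in $\cT_{1/\beta'}$ the map $\beta\cdot\id_o$ is invertible for every $o\in\cG$, and, because these objects generate, that $\Hom_{\cT_{1/\beta'}}(X,Y)$ is a $\bbZ[1/\beta']$-module for all $X,Y$; thus $\cT_{1/\beta'}$ is $R$-linear, as asserted in the statement.

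Next I would check that $L\circ E\colon\dgcat(k)\to\cT_{1/\beta'}$ again satisfies conditions (i)--(iii). Morita invariance is inherited directly from $E$. Since $L$ is triangulated, it carries the distinguished triangle associated by $E$ to a short exact sequence of dg categories to a distinguished triangle in $\cT_{1/\beta'}$, so $L\circ E$ is localizing. Finally, $L$ is co-continuous: a sequential homotopy colimit in a triangulated category with countable coproducts is computed as the cone of a map between two countable coproducts, and $L$ preserves both coproducts and cones, so $L\circ E$ preserves sequential homotopy colimits because $E$ does.

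It then remains to apply Theorem~\ref{thm:main}(ii) to the functor $L\circ E$, the compactly generated $R$-linear category $\cT_{1/\beta'}$, and the ring $R=\bbZ[1/\beta']$, for which the hypothesis $1/\beta'\in R$ holds by construction. (Note that since $\beta=\beta_d\geq 1$ the ring map $\bbZ\to R$ is injective, so the image of $h_A(t)^{-1}$ in $R[t]$ still has degree $d$ and leading coefficient $(-1)^d\beta$; hence the invariants $d'$ and $\beta'$ attached to $R$ coincide with $d$ and $\beta$, and $\cT_{1/\beta'}=\cT_{1/\beta}$.) The theorem then yields $(L\circ E)(\cD^b_\dg(\mathrm{qgr}(A)))\simeq (L\circ E)(k)^{\oplus d'}$, which is precisely the claimed isomorphism $E(\cD^b_\dg(\mathrm{qgr}(A)))_{1/\beta'}\simeq E(k)_{1/\beta'}^{\oplus d'}$ in $\cT_{1/\beta'}$, once one recalls that $(-)_{1/\beta'}$ denotes $L(-)$. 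The only ingredient that is not pure formalism is the invocation of Neeman's theorem that the Verdier quotient of a compactly generated triangulated category by the localizing subcategory generated by a set of compact objects is again compactly generated with coproduct-preserving quotient functor; granting that, everything else is bookkeeping, so I do not anticipate a serious obstacle.
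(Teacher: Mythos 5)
Your proposal is correct and follows essentially the same route as the paper: the paper's proof likewise deduces the corollary from Theorem~\ref{thm:main}(ii) applied to the composite $E(-)_{1/\beta'}$, using that $\cT_{1/\beta'}$ is compactly generated and $\bbZ[1/\beta']$-linear and that the localization functor is triangulated and preserves arbitrary direct sums. You merely make explicit (via Neeman's localization theorem and the verification that the composite still satisfies conditions (i)--(iii)) what the paper asserts ``by construction.''
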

\begin{proof}
By construction, the triangulated category $\cT_{1/\beta'}$ is compactly generated and $R[1/\beta']$-linear. Moreover, the $\bbZ[1/\beta']$-linearization functor $(-)_{1/\beta'}\colon \cT \to \cT_{1/\beta'}$ is triangulated and preserves arbitrary direct sums. Therefore, the proof follows from Theorem \ref{thm:main}(ii) applied to $E=E(-)_{1/\beta'}$ (with $R=R[1/\beta']$).
\end{proof}
\begin{example}[Algebraic $K$-theory]\label{ex:1}
Nonconnective algebraic $K$-theory gives rise to a functor $\bbK\colon \dgcat(k) \to \Ho(\Spt)$, with values in the homotopy category of spectra, satisfying conditions (i)-(iii); see \cite[\S8.2.1]{book}. Therefore, by applying Theorem \ref{thm:main}(i) to $E=\bbK$ (with $R=\bbZ$) and to the sphere spectrum $o=\bbS$, we obtain isomorphisms\footnote{In the particular case where $m=0$, the isomorphism \eqref{eq:iso-K} was originally established by Mori-Smith in \cite[Thm.~2.3]{MS}.}
\begin{eqnarray}\label{eq:iso-K}
\bbK_m(\mathrm{qgr}(A)) \simeq \bbZ[t]/\langle h_A(t)^{-1}\rangle \otimes_\bbZ \bbK_m(k) && m \in \bbZ\,.
\end{eqnarray}
Moreover, since the triangulated category $\Ho(\Spt)$ is compactly generated, Corollary \ref{cor:main} implies that $\bbK(\mathrm{qgr}(A))_{1/\beta}\simeq \bbK(k)^{\oplus d}_{1/\beta}$. All the above holds {\em mutatis mutandis} with $\bbK$ replaced by $KH$ or $K^{\mathrm{et}}$.
\end{example}
\begin{example}[Mixed complex]\label{ex:2}
Following Kassel \cite{Kassel}, a {\em mixed complex} is a (right) dg module over the $k$-algebra of dual numbers $\Lambda :=k[\epsilon]/\epsilon^2$ with $\mathrm{deg}(\epsilon)=-1$ and $d(\epsilon)=0$. The mixed complex gives rise to a functor $C\colon \dgcat(k) \to \cD(\Lambda)$, with values in the derived category of $\Lambda$, satisfying conditions (i)-(iii); see \cite[\S8.2.4]{book}. Therefore, since the category $\cD(\Lambda)$ is compactly generated, by applying Theorem \ref{thm:main}(ii) to $E=C$ (with $R=k$), we obtain an isomorphism $C(\mathrm{qgr}(A))\simeq C(k)^{\oplus d'}$.
\end{example}
\begin{example}[Cyclic homology and its variants]\label{ex:3}
As explained by Keller in \cite[\S2.2]{Keller1}, Hochschild homology $HH$, cyclic homology $HC$, periodic cyclic homology $HP$, and negative cyclic homology $HN$, can be recovered from the mixed complex $C$. Therefore, making use of Example \ref{ex:2}, we conclude that
\begin{eqnarray*}
HH(\mathrm{qgr}(A))\simeq HH(k)^{\oplus d'} && HC(\mathrm{qgr}(A))\simeq HC(k)^{\oplus d'}  \\
HP(\mathrm{qgr}(A))\simeq HP(k)^{\oplus d'} && HN(\mathrm{qgr}(A))\simeq HN(k)^{\oplus d'} \,.
\end{eqnarray*}
\end{example}
\begin{example}[Topological Hochschild homology]\label{ex:4}
Topological Hochschild homology gives rise to a (lax symmetric monoidal) functor $THH\colon \dgcat(k) \to \Ho(\Spt)$ satisfying conditions (i)-(iii); see \cite[\S8.2.8]{book}. Since the ``inclusion of the $0^{\mathrm{th}}$ skeleton'' yields a ring homomorphism $k \to THH_0(k)$, the abelian groups $THH_\ast$ are then naturally equipped with a $k$-linear structure. Therefore, using the fact that the triangulated category $\Ho(\Spt)$ is (compactly) generated by the sphere spectrum $\bbS$, an argument similar to the one used in the proof of Theorem \ref{thm:main}(ii) allows us to conclude that $THH(\mathrm{qgr}(A))\simeq THH(k)^{\oplus d'}$. For example, in the particular where $k=\bbF_p$, with $p$ a prime number, we have the following isomorphisms:
$$ THH_m(\mathrm{qgr}(A))\simeq \left\{ \begin{array}{ll}
(\bbF_p)^{\oplus d'} &   m\geq 0 \,\,\, \mathrm{even} \\
0 &  \text{otherwise}\,.
\end{array}\right.
$$
\end{example}
Intuitively speaking, Theorem \ref{thm:main} (as well as Corollary \ref{cor:main} and Examples \ref{ex:1}-\ref{ex:4}) shows that all the different invariants of a noncommutative projective scheme $\mathrm{qgr}(A)$ are completely determined by the Hilbert series $h_A(t)$~of~$A$. 


Theorem \ref{thm:main} (as well as Corollary \ref{cor:main}) may be applied to the following algebras:
\begin{example}[Quantum polynomial algebras]
Choose constant elements $q_{ij} \in k^\times$ with $1 \leq i < j \leq d$. The following $\bbN$-graded Noetherian $k$-algebra
$$ A:=k\langle x_1, \ldots, x_d\rangle/\langle x_j x_i - q_{ij} x_i x_j\,|\, 1 \leq i < j \leq d\rangle\,,$$
with $\mathrm{deg}(x_i)=1$, is called the {\em quantum polynomial algebra} associated to $q_{ij}$. This algebra is Koszul,  has global dimension $d$, and $h_A(t)^{-1}=(1-t)^d$; see \cite[\S1]{Manin-Fourier}. 
\end{example}
\begin{example}[Quantum matrix algebras]
Choose a $q \in k^\times$. The $\bbN$-graded Noetherian $k$-algebra $A$ defined as the quotient of $k\langle x_1, x_2, x_3, x_4\rangle$~by~the~relations
\begin{eqnarray*}
x_1x_2 = q x_2 x_1 & x_1x_3=qx_3 x_1 &  x_1 x_4 - x_4 x_1 = (q-q^{-1}) x_2 x_3 \\
x_2 x_3 = x_3 x_2 &  x_2 x_4=q x_4 x_2 & x_1 x_4 = q x_4 x_3\,,
\end{eqnarray*}
with $\mathrm{deg}(x_i)=1$, is called the {\em quantum matrix algebra} associated to $q$. This algebra is Koszul, has global dimension $4$, and $h_A(t)^{-1}=(1-t)^4$; see \cite[\S1]{Manin-Fourier}. 
\end{example}
\begin{example}[Sklyanin algebras]
Let $C$ be a smooth elliptic curve, $\sigma \in \mathrm{Aut}(C)$ an automorphism given by translation under the group law, and $\cL$ a line bundle on $C$ of degree $d\geq 3$. We write $\Gamma_\sigma \subset C\times C$ for the graph of $\sigma$ and $V$ for the $d$-dimensional $k$-vector space $H^0(C,\cL)$. The $\bbN$-graded Noetherian $k$-algebra $A:=T(V)/R$, where
$$R:=H^0(C\times C, (\cL\boxtimes \cL)(-\Gamma_\sigma))\subset H^0(C\times C, \cL \boxtimes \cL)=V\otimes V\,,$$
is called the {\em Sklyanin algebra} associated to the triple $(C,\sigma, \cL)$. This algebra is Koszul, has global dimension $d$, and $h_A(t)^{-1}=(1-t)^d$; see \cite{Feigin}\cite[\S1]{TateVdb}.
%
%
\end{example}
\begin{example}[Homogenized enveloping algebras]
Let $\mathfrak{g}$ be a finite dimensional Lie algebra. The following $\bbN$-graded Noetherian $k$-algebra ($z$ is a new variable)
$$ A:= T(\mathfrak{g} \oplus kz)/ \langle\{z\otimes x - x\otimes z \,|\, x \in \mathfrak{g}\}\cup \{x\otimes y - y\otimes x - [x,y]\otimes z \,|\, x, y \in \mathfrak{g}\}\rangle\,,$$
is called the {\em homogenized enveloping algebra} of $\mathfrak{g}$. This algebra is Koszul, has global dimension $d:=\mathrm{dim}(\mathfrak{g})+1$, and $h_A(t)^{-1}=(1-t)^{d}$; see \cite[\S12]{Smith}.
\end{example}
\begin{example}\label{ex:key}
Let $k$ be an uncountable algebraically closed field. Choose a pair of elements $(\theta, \rho)$ of $k^\times$ which are algebraically independent over the prime field of $k$ and write $\Theta:=\frac{\theta-1}{\theta+1}$ and $\Delta:=\frac{\rho-1}{\rho+1}$. Under these assumptions and notations, the $\bbN$-graded Noetherian $k$-algebra $A:=k\langle x_1, x_2, x_3, x_4\rangle/\langle f_1, \ldots, f_6\rangle$, where 
\begin{eqnarray*}
f_1 :=  x_1(\Theta x_1 - x_3) + x_3(x_1 - \Theta x_3) &&
f_2  :=  x_1(\Theta x_2 - x_4) + x_3(x_2 - \Theta x_4) \\
f_3 :=  x_2(\Theta x_1 - x_3) + x_4(x_1 - \Theta x_3) &&
f_4  :=  x_2(\Theta x_2 - x_4) + x_4(x_2 - \Theta x_4) \\
f_5 :=  x_1(\Delta x_1 - x_2) + x_4(x_1 - \Delta x_2) &&
f_6  :=  x_1(\Delta x_3 - x_4) + x_4(x_3 - \Delta x_4)\,,
\end{eqnarray*}
is Koszul, has global dimension $4$, and $h_A(t)^{-1}=(1-t)^{4}$; see \cite[Thm.~3.5]{Sierra}. 
\end{example}
\subsection*{Gorenstein algebras}
Recall that a $\bbN$-graded Noetherian $k$-algebra $A=\bigoplus_{n\geq 0} A_n$ is called {\em Gorenstein}, with Gorenstein parameter $l$, if it has finite injective dimension $m$ and ${\bf R}\mathrm{Hom}_A(k,A)\simeq \Sigma^{-m}k(l)$, where $k(l)$ stands for the $\bbZ$-graded (right) $A$-module $k(l)_n:=k_{n+l}$. Let us assume moreover that $A$ has finite global dimension $d$; this implies that $d=m$. Under these assumptions, a remarkable result of Orlov (see \cite[Cor.~2.7]{Orlov}) asserts that the bounded derived category $\cD^b(\mathrm{qgr}(A))$ admits a full exceptional collection of length $l$. This leads naturally to the following result:
\begin{theorem}\label{thm:Orlov}
Let $A$ be a $\bbN$-graded Noetherian $k$-algebra and $E$ a functor satisfying conditions\footnote{More generally, condition (ii) can be replaced by {\em additivity} in the sense of \cite[Def.~2.1]{book}.} (i)-(ii). Assume that $A$ is Gorenstein, with Gorenstein parameter $l$, and has finite global dimension $d$. Under these assumptions, we have an isomorphism $E(\cD^b_\dg(\mathrm{qgr}(A)))\simeq E(k)^{\oplus l}$.
\end{theorem}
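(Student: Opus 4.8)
The plan is to reduce Theorem \ref{thm:Orlov} to the additivity of $E$ by means of Orlov's theorem. By \cite[Cor.~2.7]{Orlov}, the hypotheses on $A$ guarantee that $\cD^b(\mathrm{qgr}(A))$ admits a full exceptional collection $(\cE_1, \dots, \cE_l)$ of length $l$. Equivalently, $\cD^b(\mathrm{qgr}(A))$ carries a semiorthogonal decomposition $\langle \cT_1, \dots, \cT_l\rangle$ in which $\cT_i$ is the triangulated subcategory generated by the exceptional object $\cE_i$; since $\cE_i$ is exceptional we have $\REnd(\cE_i)\simeq k$, whence each $\cT_i$ is equivalent to $\mathrm{perf}(k)=\cD^b(k)$.

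First I would promote this picture to the canonical dg enhancement $\cD^b_\dg(\mathrm{qgr}(A))$, using the compatibility of semiorthogonal decompositions with dg enhancements. Concretely, the triangulated decomposition $\langle \cT_1,\dots,\cT_l\rangle$ is induced by a semiorthogonal decomposition of the dg category $\cD^b_\dg(\mathrm{qgr}(A))$ whose $i$-th component is the dg subcategory generated by $\cE_i$; and this component is Morita equivalent to $k$ --- equivalently, quasi-equivalent to $\perfdg(k)$ --- precisely because $\REnd(\cE_i)\simeq k$ together with the semiorthogonality vanishing $\mathrm{Hom}(\cE_j,\cE_i[\ast])=0$ for $j>i$. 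Then the formal input takes over: a length-two semiorthogonal decomposition $\cA=\langle\cB,\cC\rangle$ of a dg category produces a short exact sequence $0\to\cB\to\cA\to\cC\to 0$ in the sense of Drinfeld/Keller which, thanks to the embedding of the complementary component, splits; consequently any functor $E$ satisfying conditions (i)-(ii) --- which is in particular additive in the sense of \cite[Def.~2.1]{book}, and additivity is all that is used --- sends it to a direct sum decomposition $E(\cA)\simeq E(\cB)\oplus E(\cC)$. Iterating over the $l$ pieces and invoking Morita invariance in the form $E(\perfdg(k))\simeq E(k)$ yields the desired isomorphism $E(\cD^b_\dg(\mathrm{qgr}(A)))\simeq E(k)^{\oplus l}$. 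Note that condition (iii) plays no role in this argument.

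The step I expect to be the main obstacle is the passage from Orlov's triangulated semiorthogonal decomposition to a genuine dg-categorical one: one must know that the components are literally $\perfdg(k)$, rather than merely abstractly equivalent as triangulated categories, and that the gluing is realized by honest dg functors into $\cD^b_\dg(\mathrm{qgr}(A))$. For an exceptional collection this is comparatively soft, as it reduces to the endomorphism computation $\REnd(\cE_i)\simeq k$ and the semiorthogonality relations, but it is the point where an appeal to the theory of dg enhancements (uniqueness of enhancements, gluing of dg categories) is needed; the remaining manipulations with additive invariants are routine.
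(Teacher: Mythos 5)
Your proposal is correct and follows essentially the same route as the paper: both invoke Orlov's result \cite[Cor.~2.7]{Orlov} to obtain a full exceptional collection of length $l$ on $\cD^b(\mathrm{qgr}(A))$ and then use the additivity of $E$ to conclude $E(\cD^b_\dg(\mathrm{qgr}(A)))\simeq E(k)^{\oplus l}$. The only difference is that the paper cites \cite[\S2.4.2 and \S8.4.5]{book} for the fact that functors satisfying conditions (i)-(ii) send full exceptional collections of length $l$ to $E(k)^{\oplus l}$, whereas you unfold that standard argument (split short exact sequences of dg categories arising from the semiorthogonal decomposition, with each component Morita equivalent to $k$).
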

\begin{proof}
As explained in \cite[\S2.4.2 and \S8.4.5]{book}, every functor $E$ satisfying conditions (i)-(ii) sends a full exceptional collections of length $l$ to the direct sum $E(k)^{\oplus l}$.
\end{proof}
\begin{remark}
\begin{itemize}
\item[(i)] Since $A$ is connected and has finite global dimension, the Hilbert series $h_A(t)$ is invertible and its inverse $h_A(t)^{-1}$ is a polynomial. Moreover, the Gorenstein condition implies that $h_A(t)^{-1}$ is monic and has degree $l$.
\item[(ii)] As proved in \cite[Chap.~2\, Thm.~2.5]{Quadratic}, $A$ is moreover Koszul if and only if $d=l$.
\end{itemize}
\end{remark}
Note that Theorem \ref{thm:main} does {\em not} follows from Theorem \ref{thm:Orlov} because, in general, Koszulness does {\em not} implies\footnote{In the particular case where $d=3$, Koszulness indeed implies Gorensteiness; see \cite[Cor.~0.2]{Zhang}.} Gorensteiness. For instance, the algebras $A$ of Example \ref{ex:key} are Koszul but {\em not} Gorenstein; see \cite[Thm.~3.5]{Sierra}. In this latter example, we have moreover $\mathrm{dim}_k(\mathrm{Ext}^i_A(k,A))=\infty$ for $i=2,3,4$; see \cite[Prop.~5.11]{Sierra}. Consequently, the $k$-linear triangulated categories $\cD^b(\mathrm{qgr}(A))$ are {\em not} even Ext-finite.
\section{Proof of Theorem \ref{thm:main}}
Recall from Quillen \cite[\S2]{Quillen} that an exact category $\cE$ is an additive category equipped with a family of short exact sequences satisfying some standard conditions. In order to simplify the exposition, given an exact functor $F\colon \cE \to \cE'$, we will still denote by $F\colon \cD^b_\dg(\cE) \to \cD^b_\dg(\cE')$ the induced dg functor. We start with the following general result of independent interest:
\begin{proposition}\label{prop:additive}
Let $0 \to F_1 \to F_2 \to F_3 \to 0$ be a short exact sequence of exact functors $F_1, F_2, F_3\colon \cE \to \cE'$. Given any localizing functor $E\colon \dgcat(k) \to \cT$, we have the following equality $E(F_2) = E(F_1) + E(F_3)$.
\end{proposition}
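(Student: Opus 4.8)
The plan is to deduce the proposition from the Additivity Theorem, in the form valid for localizing invariants of dg categories. First I would repackage the hypothesis as a single exact functor. Let $\cS(\cE')$ denote the exact category whose objects are the admissible short exact sequences $0 \to a' \to a \to a'' \to 0$ in $\cE'$, whose morphisms are the evident commutative ladders, and whose admissible short exact sequences are those admissible in each of the three terms separately. Extracting the sub-, the total-, and the quotient-object gives three exact functors $s, t, q\colon \cS(\cE') \to \cE'$, and sending $a$ to the sequence $0 \to a \to a \to 0 \to 0$ with middle map the identity, resp.\ to $0 \to 0 \to a \to a \to 0$ with middle map the identity, gives two exact functors $i, j\colon \cE' \to \cS(\cE')$; one records the relations $si=ti=\id$, $qi=0$, $tj=qj=\id$, $sj=0$. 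The point is that a short exact sequence of exact functors $0 \to F_1 \to F_2 \to F_3 \to 0$ is exactly an exact functor $F\colon \cE \to \cS(\cE')$ with $sF=F_1$, $tF=F_2$, $qF=F_3$ (namely $x \mapsto (0 \to F_1(x)\to F_2(x)\to F_3(x)\to 0)$). Passing to dg enhancements and precomposing with $E(F)$, it suffices to prove the universal identity $E(t)=E(s)+E(q)$ between morphisms $E(\cD^b_\dg(\cS(\cE')))\to E(\cD^b_\dg(\cE'))$ in $\cT$.

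The one nontrivial ingredient, which I would invoke from the literature, is that the induced dg functors fit into a short exact sequence of dg categories
\[
0 \too \cD^b_\dg(\cE') \overset{i}{\too} \cD^b_\dg(\cS(\cE')) \overset{q}{\too} \cD^b_\dg(\cE') \too 0
\]
that is split by $j$; equivalently, $\cD^b_\dg(\cS(\cE'))$ admits the semiorthogonal decomposition $\langle i(\cD^b_\dg(\cE')), j(\cD^b_\dg(\cE'))\rangle$. This is the derived-category incarnation of Quillen's additivity theorem (cf.\ \cite{book}, and the localization theorems of Keller and Schlichting for derived categories of exact categories), and I expect it to be the main obstacle: one must check that $q$ really presents a Drinfeld quotient of $\cD^b_\dg(\cS(\cE'))$ by the image of $i$ — the orthogonality $\RHom(i(a),j(c))=0$ together with generation — which is routine but needs some attention to the objectwise exact structure on $\cS(\cE')$.

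Granting this, the rest is formal. Applying the localizing functor $E$ turns the split short exact sequence into a split distinguished triangle with middle term $E(\cD^b_\dg(\cS(\cE')))$ and with $E(q)\circ E(j)=\id$; hence that middle term is $\mathrm{im}(E(i))\oplus\mathrm{im}(E(j))$. From the recorded relations, $E(j)\circ E(q)$ is the projector onto $\mathrm{im}(E(j))$ along $\mathrm{im}(E(i))$ (as $E(q)E(j)=\id$ and $E(q)E(i)=E(qi)=0$), whereas $E(i)\circ E(s)$ is a projector restricting to the identity on $\mathrm{im}(E(i))$ and vanishing on $\mathrm{im}(E(j))$ (as $E(s)E(i)=\id$ and $E(s)E(j)=E(sj)=0$); so the two projectors are the complementary pair, giving $\id = E(i)\circ E(s) + E(j)\circ E(q)$ on $E(\cD^b_\dg(\cS(\cE')))$. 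Composing with $E(t)$ on the left and using $ti=tj=\id$ yields $E(t)=E(s)+E(q)$; composing with $E(F)$ on the right gives $E(F_2)=E(F_1)+E(F_3)$. Since the argument only uses that $E$ sends split short exact sequences of dg categories to split triangles, it in fact suffices to assume $E$ additive in the sense of \cite[Def.~2.1]{book}.
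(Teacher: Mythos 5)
Your proposal is correct and follows essentially the same route as the paper: you build the exact category of short exact sequences in $\cE'$ (the paper's $\mathrm{Ex}(\cE')$), use the split short exact sequence of dg categories induced by the quotient functor to decompose $E(\cD^b_\dg(\mathrm{Ex}(\cE')))$ as two copies of $E(\cD^b_\dg(\cE'))$, and deduce $E(\pi_2)=E(\pi_1)+E(\pi_3)$ before precomposing with the classifying functor $F$. The only cosmetic difference is that you phrase the final step via complementary projectors rather than comparing both sides against the isomorphism $[E(\iota_1)\,\,E(\iota_2)]$, and, like the paper, you leave the exactness of the dg quotient sequence as the one ingredient quoted rather than verified.
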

\begin{proof}
Let $\mathrm{Ex}(\cE')$ be the category of short exact sequences $\varepsilon=(a \to b \to c)$ in $\cE'$; this is also an exact category with short exact sequence defined componentwise. By construction, $\mathrm{Ex}(\cE')$ comes equipped with the following exact functors 
\begin{eqnarray*}
\iota_1\colon \cE' \too \mathrm{Ex}(\cE') \quad a \mapsto (a = a \to 0) && \iota_2\colon \cE' \too \mathrm{Ex}(\cE')\quad a\mapsto (0 \to a = a)
\end{eqnarray*}
\begin{eqnarray*}
\pi_1\colon \mathrm{Ex}(\cE') \stackrel{\varepsilon \mapsto a}{\too} \cE' & \pi_2\colon \mathrm{Ex}(\cE') \stackrel{\varepsilon \mapsto b}{\too} \cE' & \pi_3\colon \mathrm{Ex}(\cE') \stackrel{\varepsilon \mapsto c}{\too} \cE'
\end{eqnarray*}
satisfying the equalities $\pi_1 \circ \iota_1 = \pi_2 \circ \iota_1 = \id$, $\pi_3 \circ \iota_1 = \pi_1 \circ \iota_2 =0$, and $\pi_2 \circ \iota_2 = \pi_3 \circ \iota_2 = \id$. Moreover, we have the following short exact sequence of dg categories
$$ 0 \too \cD^b_\dg(\cE') \stackrel{\iota_1}{\too} \cD^b_\dg(\mathrm{Ex}(\cE')) \stackrel{\pi_3}{\too} \cD^b_\dg(\cE') \too 0$$
and consequently the following distinguished triangle
\begin{equation*}\label{eq:triangle-last}
E(\cD^b_\dg(\cE'))\stackrel{E(\iota_1)}{\too} E(\cD^b_\dg(\mathrm{Ex}(\cE'))) \stackrel{E(\pi_3)}{\too} E(\cD^b_\dg(\cE')) \stackrel{\partial}{\too} \Sigma E(\cD^b_\dg(\cE'))\,.
\end{equation*}
Since $\pi_3 \circ \iota_2 =\id$, the preceding triangle splits an induces an isomorphism
\begin{equation}\label{eq:isom}
[E(\iota_1)\,\, E(\iota_2)]\colon E(\cD^b_\dg(\cE')) \oplus E(\cD^b_\dg(\cE')) \stackrel{\simeq}{\too} E(\cD^b_\dg(\mathrm{Ex}(\cE')))\,.
\end{equation}
Note that a short exact sequence of exact functors $0 \to F_1 \to F_2 \to F_3 \to 0$~is~the same data as an exact functor $F\colon \cE \to \mathrm{Ex}(\cE')$. Therefore, by combining the equalities $E(\pi_2) \circ [E(\iota_1)\,\, E(\iota_2)] = [\id\,\, \id]$ and $(E(\pi_1)+E(\pi_3))\circ [E(\iota_1)\,\, E(\iota_2)] = [\id\,\, \id]$ with the fact that \eqref{eq:isom} is an isomorphism, we conclude that $E(\pi_2) = E(\pi_1) +E(\pi_3)$. The proof follows now from the equalities $\pi_1 \circ F=F_1$, $\pi_2 \circ F= F_2$, and $\pi_3 \circ F= F_3$. 
\end{proof}
Let $B=\bigoplus_{n \geq 0}B_n$ be a $\bbN$-graded $k$-algebra and $\mathrm{grproj}(B)$ the exact category of finitely generated projective $\bbZ$-graded (right) $B$-modules. The following general computation is also of independent interest:
\begin{proposition}\label{prop:pgr}
We have an isomorphism $E(\cD_\dg^b(\mathrm{grproj}(B)))\simeq \oplus_{-\infty}^{+\infty} E(B_0)$.
\end{proposition}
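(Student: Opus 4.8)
The plan is to replace $\mathrm{grproj}(B)$ by a combinatorial model, exhibit that model as a sequential homotopy colimit of finite ``triangular'' dg categories, and then peel those off one object at a time; the three properties of $E$ (Morita invariance, localizing, co-continuity) are used in exactly that order.

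First I would introduce the $k$-linear category $\widetilde{B}$ with set of objects $\bbZ$, with $\Hom_{\widetilde{B}}(m,n):=B_{n-m}$, and with composition induced by the multiplication of $B$. Since $B$ is $\bbN$-graded we have $\Hom_{\widetilde{B}}(m,n)=0$ for $n<m$ and $\End_{\widetilde{B}}(n)=B_0$ for every $n$. The assignment $n\mapsto B(n)$ identifies $\widetilde{B}$ with the full subcategory of $\mathrm{grmod}(B)$ on the shifted free modules $\{B(n)\}_{n\in\bbZ}$ (one checks $\Hom_{\mathrm{grmod}(B)}(B(m),B(n))=B_{n-m}$), and $\mathrm{grproj}(B)$ is the idempotent completion of its additive closure. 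Consequently $\cD^b_\dg(\mathrm{grproj}(B))$ is Morita equivalent to the dg category $\widetilde{B}$, so by Morita invariance of $E$ it suffices to prove that $E(\widetilde{B})\simeq \oplus_{-\infty}^{+\infty}E(B_0)$.

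For integers $a\le b$, let $\widetilde{B}_{[a,b]}\subseteq\widetilde{B}$ be the full dg subcategory on the objects $\{a,a+1,\ldots,b\}$. The evident inclusions exhibit $\widetilde{B}$ as the sequential homotopy colimit $\hocolim_N \widetilde{B}_{[-N,N]}$, so co-continuity of $E$ gives $E(\widetilde{B})\simeq\hocolim_N E(\widetilde{B}_{[-N,N]})$. Now each $\widetilde{B}_{[a,b]}$ is triangular: since $\Hom_{\widetilde{B}}(b,m)=B_{m-b}=0$ for $a\le m<b$, removing the object $b$ presents $\widetilde{B}_{[a,b]}$ as the upper triangular gluing of $\widetilde{B}_{[a,b-1]}$ with $B_0$ along the $(\widetilde{B}_{[a,b-1]},B_0)$-bimodule $\oplus_{m=a}^{b-1}B_{b-m}$; this gives a semiorthogonal decomposition $\perfdg(\widetilde{B}_{[a,b]})=\langle\perfdg(B_0),\perfdg(\widetilde{B}_{[a,b-1]})\rangle$. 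Exactly as in the proof of Theorem \ref{thm:Orlov} (which uses only that $E$ is localizing, equivalently additive), such a decomposition induces a splitting $E(\widetilde{B}_{[a,b]})\simeq E(\widetilde{B}_{[a,b-1]})\oplus E(B_0)$; iterating on $b-a$ yields $E(\widetilde{B}_{[a,b]})\simeq E(B_0)^{\oplus(b-a+1)}$, and for $[a,b]\subseteq[a',b']$ the transition map is the obvious split inclusion of direct summands. Feeding this into the colimit, $E(\widetilde{B})\simeq\hocolim_N E(B_0)^{\oplus(2N+1)}\simeq\oplus_{-\infty}^{+\infty}E(B_0)$.

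The reductions coming from Morita invariance and co-continuity are formal; I expect the step needing the most care to be the triangular decomposition together with the bookkeeping in the colimit. One must choose the semiorthogonal decompositions of the successive $\widetilde{B}_{[-N,N]}$ compatibly, so that the homotopy colimit of the resulting split inclusions is genuinely the countable coproduct $\oplus_{-\infty}^{+\infty}E(B_0)$ rather than some other model of it, and one must make sure the Morita equivalence $\cD^b_\dg(\mathrm{grproj}(B))\simeq\widetilde{B}$ is compatible with the exact (and dg) structures implicit in the notation $\cD^b_\dg(-)$ on which $E$ is evaluated.
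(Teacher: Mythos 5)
Your argument is correct, but it takes a genuinely different route from the paper's. You Morita-replace $\mathrm{grproj}(B)$ by the $\bbZ$-indexed dg category $\widetilde{B}$ of shifted free modules, exhaust it by the finite intervals $\widetilde{B}_{[-N,N]}$, and split off one object at a time using the upper-triangular gluing (equivalently, the split short exact sequence of dg categories underlying a semiorthogonal decomposition) --- the same additivity mechanism invoked in the proof of Theorem \ref{thm:Orlov}. The paper instead works with the exact categories themselves: it uses the retraction $\psi\circ\varphi=\id$ between $\mathrm{grproj}(B)$ and $\mathrm{grproj}(B_0)$, and the entire content is Lemma \ref{lem:aux}, \ie\ that $E(\varphi\circ\psi)=\id$, proven via Quillen's filtration functors $F_m$ and an inductive application of Proposition \ref{prop:additive} (additivity of $E$ on short exact sequences of exact functors). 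The two proofs use the three axioms in the same roles (Morita invariance to normalize the input, the localizing property for the finite splittings, co-continuity for the passage to the infinite direct sum), but yours bypasses Proposition \ref{prop:additive} and the filtration bookkeeping entirely, whereas the paper's version yields that proposition as a statement of independent interest. The compatibility issue you flag is real but easily discharged: for each interval the map $\oplus_{n=a}^{b}E(B_0)\to E(\widetilde{B}_{[a,b]})$ induced by the object inclusions $\ast\mapsto n$ is an isomorphism by your induction, and these maps commute strictly with the transition functors, so the colimit identification is forced. One small imprecision: ``localizing, equivalently additive'' should read ``localizing, hence additive'' --- the implication only goes one way, and additivity is all that is used.
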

\begin{proof}
Consider $B_0$ as an $\bbN$-graded $k$-algebra concentrated in degree zero. The canonical inclusion $B_0 \to B$ and projection $B\to B_0$ of $\bbN$-graded $k$-algebras give rise to the following base-change exact functors:
\begin{eqnarray*}
\varphi\colon \mathrm{grproj}(B_0) \too \mathrm{grproj}(B) && P\mapsto P\otimes_{B_0}B \\
\psi\colon \mathrm{grproj}(B) \too \mathrm{grproj}(B_0) && P\mapsto P\otimes_{B}B_0\,.
\end{eqnarray*}
Since $\psi\circ \varphi= \id$, it follows from Lemma \ref{lem:aux} below that $\varphi$ and $\psi$ give rise to inverse isomorphisms between $E(\cD^b_\dg(\mathrm{grproj}(B)))$ and $E(\cD^b_\dg(\mathrm{grproj}(B_0)))$. 

Now, note that we have the following canonical equivalence of exact categories
\begin{eqnarray}\label{eq:equivalence-exact}
\mathrm{grproj}(B_0) \stackrel{\simeq}{\too} \amalg_{n \in \bbZ} \mathrm{proj}(B_0) && P \mapsto \{P_n\}_{n \in \bbZ}\,,
\end{eqnarray}
where $\mathrm{proj}(B_0)$ stands for the exact category of finitely generated projective (right) $B_0$-modules. Since the dg category $\cD^b_\dg(\mathrm{proj}(B_0))$ is Morita equivalent to the $k$-algebra $B_0$ and the functor $E$ is co-continuous, we then conclude from the equivalence \eqref{eq:equivalence-exact} that $E(\cD^b_\dg(\mathrm{grproj}(B_0)))\simeq \oplus^{+ \infty}_{-\infty} E(B_0)$. This finishes the proof.
\end{proof}
\begin{lemma}\label{lem:aux}
The following endomorphism is equal to the identity
$$ E(\varphi\circ \psi)\colon E(\cD^b_\dg(\mathrm{grproj}(B))) \too E(\cD^b_\dg(\mathrm{grproj}(B)))\,.$$
\end{lemma}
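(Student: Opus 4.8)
The plan is to prove that $E(\varphi)$ is an isomorphism. Since $\psi\circ\varphi=\id$ on $\mathrm{grproj}(B_0)$ we already have $E(\psi)\circ E(\varphi)=\id$; once $E(\varphi)$ is known to be invertible this forces $E(\psi)=E(\varphi)^{-1}$, and therefore $E(\varphi\circ\psi)=E(\varphi)\circ E(\psi)=\id$, which is the assertion.

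First I would reduce to a ``finite window''. For integers $a\leq b$, let $\mathrm{grproj}(B)_{[a,b]}\subseteq\mathrm{grproj}(B)$ be the full idempotent-complete exact subcategory of direct summands of finite direct sums of the shifts $B(a),B(a+1),\dots,B(b)$, and similarly truncate $\mathrm{grproj}(B_0)\simeq\amalg_{c\in\bbZ}\mathrm{proj}(B_0)$ to the sub-coproduct over $c\in[a,b]$. Then $\mathrm{grproj}(B)$ and $\mathrm{grproj}(B_0)$ are filtered unions of these finite windows over a countable directed poset, the functors $\varphi$ and $\psi$ preserve the filtrations (up to the obvious reindexing), and passing to $\cD^b_\dg(-)$ commutes with these filtered (hence, after reindexing, sequential) homotopy colimits. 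Since $E$ is co-continuous, it suffices to show that $E(\varphi)$ restricts to an isomorphism $E(\cD^b_\dg((\amalg_{c=a}^{b}\mathrm{proj}(B_0))))\to E(\cD^b_\dg(\mathrm{grproj}(B)_{[a,b]}))$ for each fixed window.

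The heart of the matter is then the observation that $\cD^b_\dg(\mathrm{grproj}(B)_{[a,b]})$ carries a semi-orthogonal decomposition $\langle\cB_a,\cB_{a+1},\dots,\cB_b\rangle$, where $\cB_c$ is the triangulated subcategory generated by $B(c)$: the objects $B(c)$ generate, they have no higher (self- or mutual) extensions since they are projective, and $\Hom_{\cD^b}(B(c),B(c'))=B_{c'-c}$ vanishes when $c'<c$; moreover $\End_{\cD^b}(B(c))=B_0$ is concentrated in degree $0$, so $\cB_c\simeq\cD^b_\dg(\mathrm{proj}(B_0))$ via $Q\mapsto Q\otimes^{\bf L}_{B_0}B(c)$, and these subcategories $\cB_c$ are exactly the essential images under $\varphi$ of the individual copies of $\mathrm{proj}(B_0)$. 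As in the proof of Proposition \ref{prop:additive}, a semi-orthogonal decomposition yields, after applying the localizing functor $E$, a \emph{split} distinguished triangle at each stage --- the inclusion of the orthogonal complement splits the quotient map --- so induction on $b-a$ produces an isomorphism $E(\varphi)\colon\bigoplus_{c=a}^{b}E(B_0)\isoto E(\cD^b_\dg(\mathrm{grproj}(B)_{[a,b]}))$ matching the summands with the images of the pieces $\cB_c$. Combined with the reduction above this gives that $E(\varphi)$ is an isomorphism, whence $E(\varphi\circ\psi)=\id$.

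The step I expect to be delicate is the middle one: one must verify, staying at the level of small idempotent-complete dg categories of perfect complexes (where $E$ is genuinely localizing and co-continuous), that the data above really do assemble into a semi-orthogonal decomposition --- equivalently, that $\cD^b_\dg(\mathrm{grproj}(B)_{[a,b]})$ is Morita equivalent to $\mathrm{perf}$ of the ``upper-triangular'' endomorphism algebra $\End_{\mathrm{grproj}(B)}\big(\bigoplus_{c=a}^{b}B(c)\big)$, that each $\cB_c$ is an admissible subcategory equivalent to $\cD^b_\dg(\mathrm{proj}(B_0))$, and that the iterated quotient functors recover $\psi$. Note that none of this uses any hypothesis on $B_0$ beyond being a $k$-algebra, which is why the argument applies to the completely general $\bbN$-graded $B$ of Proposition \ref{prop:pgr}. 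Everything else is the localizing/co-continuous formalism already in play for Proposition \ref{prop:additive}.
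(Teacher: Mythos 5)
Your proposal is correct, but it takes a genuinely different route from the paper's. The paper never shows that $E(\varphi)$ is invertible; it computes $E(\varphi\circ\psi)$ head-on, by identifying the summand $\psi(-)_n\otimes_{B_0}B(-n)$ of $\varphi\circ\psi$ with the graded piece $F_n(-)/F_{n-1}(-)$ of the degree filtration of the identity functor of $\mathrm{grproj}_q(B)$, and then summing these pieces up to the identity by an inductive application of Proposition \ref{prop:additive} (additivity of $E$ on short exact sequences of \emph{exact functors}). Your finite windows $\mathrm{grproj}(B)_{[-q,q]}$ coincide with the paper's $\mathrm{grproj}_q(B)$ and the exhaustion/co-continuity step is identical; the real difference is that you replace the functor-level filtration by an object-level semiorthogonal decomposition of $\cD^b_\dg(\mathrm{grproj}(B)_{[a,b]})$, i.e.\ by the Morita equivalence with perfect complexes over the upper-triangular algebra $\End\big(\bigoplus_{c=a}^{b}B(c)\big)$. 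This works: $\mathrm{grproj}(B)$ is a split exact category (every deflation onto a projective splits), so derived Homs between the generators reduce to $\Hom(B(c),B(c'))=B_{c'-c}$, which gives the orthogonality for $c'<c$ and $\End(B(c))=B_0$ with no higher self-extensions; the delicate step you flag is exactly the splitting of localizing invariants on upper-triangular dg categories from \cite[\S2.4]{book}, the same mechanism that powers Proposition \ref{prop:additive}, and it indeed requires nothing of $B_0$. What your route buys: it proves the stronger statement that $E(\varphi)$ is an isomorphism, hence yields Proposition \ref{prop:pgr} in the same stroke (without the separate passage through $\mathrm{grproj}(B_0)$), and it makes the indexing of the summands by the shifts $B(c)$ explicit, which is what is used later to compute the matrix $\mathrm{M}$. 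What the paper's route buys: it avoids setting up semiorthogonal decompositions altogether, needing only the elementary Proposition \ref{prop:additive} together with Quillen's observation that $P\mapsto F_m(P)/F_{m-1}(P)$ is exact.
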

\begin{proof}
Let $P \in \mathrm{grproj}(B)$. Note first that the exact endofunctor $\varphi\circ \psi$ of $\mathrm{grproj}(B)$ is given by $P \mapsto \bigoplus_{n \in \bbZ} \psi(P)_n \otimes_{B_0} B(-n)$. Since the functor $E$ is co-continuous, this yields the following equality
\begin{equation}\label{eq:equality1}
E(\varphi\circ \psi) = \sum_{n\in \bbZ} E(\psi(-)_n \otimes_{B_0}B(-n))\,.
\end{equation}
Given a finitely generated projective $\bbZ$-graded (right) $B$-module $P$ and an integer $m \in \bbZ$, let us write $F_m(P)$ for the $\bbZ$-graded (right) $B$-submodule of $P$ generated by the elements $\bigcup_{n \leq m} P_n$. In the same vein, given an integer $q\geq 0$, let us denote by $\mathrm{grproj}_q(B)$ the full subcategory of $\mathrm{grproj}(B)$ consisting of those $\bbZ$-graded (right) $B$-module $P$ such that $F_{-(q+1)}(P)=0$ and $F_q(P)=P$. Note that by definition we have an exhaustive increasing filtration
\begin{equation}\label{eq:filtration}
\mathrm{grproj}_0(B) \subset \mathrm{grproj}_1(B) \subset \cdots \subset \mathrm{grproj}_q(B) \subset \cdots \subset \mathrm{grproj}(B)\,.
\end{equation}
As explained by Quillen in \cite[pages 99-100]{Quillen}, for every $m \in \bbZ$, the assignment $P \mapsto F_m(P)/F_{m-1}(P)$ is an exact endofunctor of $\mathrm{grproj}(B)$. Moreover, we have a canonical isomorphism of exact functors between $\psi(-)_m \otimes_{B_0} B(-m)$ and $F_m(-)/F_{m-1}(-)$. Consequently, we obtain the following equality
\begin{equation}\label{eq:equality2}
\sum_{n\in \bbZ} E(\psi(-)_n \otimes_{B_0} B(-n)) = \sum_{n\in \bbZ} E(F_n(-)/F_{n-1}(-))\,.
\end{equation}
Now, note that every $\bbZ$-graded (right) $B$-module $P \in \mathrm{grproj}_q(B)$ admits a canonical filtration $0=F_{-(q+1)}(P) \subset \cdots \subset F_q(P)=P$. This yields a sequence $0=F_{-(q+1)}(-) \to \cdots \to F_q(-)=\id$ of exact endofunctors of $\mathrm{grproj}_q(B)$. Consequently, an inductive argument using the above general Proposition \ref{prop:additive} implies that the sum $\sum_{n=-q}^q E(F_n(-)/F_{n-1}(-))$ is equal to the identity of $E(\cD^b_\dg(\mathrm{grproj}_q(B)))$. Finally, using the fact that the above filtration \eqref{eq:filtration} of $\mathrm{grproj}(B)$ is exhaustive and that the functor $E$ is co-continuous, we hence conclude that
\begin{equation}\label{eq:equality3}
\sum_{n\in \bbZ} E(F_n(-)/F_{n-1}(-))=\id\,.
\end{equation}
The proof follows now from the combination of \eqref{eq:equality1} with \eqref{eq:equality2}-\eqref{eq:equality3}.
\end{proof}
Recall that $A$ is a (connected and locally finite-dimensional) $\bbN$-graded Noetherian $k$-algebra, which we assume to be Koszul and of finite global~dimension~$d$.
\begin{proposition}
We have a short exact sequence of dg categories
\begin{equation}\label{eq:sequence}
0 \too \cD^b_\dg(\mathrm{tors}(A)) \too \cD^b_\dg(\mathrm{gr}(A)) \too \cD^b_\dg(\mathrm{qgr}(A)) \too 0\,.
\end{equation}
\end{proposition}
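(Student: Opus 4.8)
The plan is to deduce the short exact sequence of dg categories from a localization sequence of the underlying triangulated categories, transported through Drinfeld's dg quotient. Since $A$ is Noetherian, $\mathrm{gr}(A)$ is a Noetherian abelian category and $\mathrm{tors}(A)\subset\mathrm{gr}(A)$ is a Serre subcategory whose Serre quotient is, by definition, $\mathrm{qgr}(A)$, with exact quotient functor $q\colon\mathrm{gr}(A)\to\mathrm{qgr}(A)$. Write $\cD^b_{\mathrm{tors}}(\mathrm{gr}(A))\subseteq\cD^b(\mathrm{gr}(A))$ for the thick triangulated subcategory of those complexes all of whose cohomology modules are torsion. I would reduce the statement to the two assertions: \textbf{(a)} $q$ induces an equivalence $\cD^b(\mathrm{gr}(A))/\cD^b_{\mathrm{tors}}(\mathrm{gr}(A))\xrightarrow{\sim}\cD^b(\mathrm{qgr}(A))$; \textbf{(b)} the canonical triangle functor $\cD^b(\mathrm{tors}(A))\to\cD^b(\mathrm{gr}(A))$ is fully faithful with essential image $\cD^b_{\mathrm{tors}}(\mathrm{gr}(A))$. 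Granting these, the proposition is formal: by (b) the induced dg functor $\cD^b_\dg(\mathrm{tors}(A))\to\cD^b_\dg(\mathrm{gr}(A))$ is quasi-fully-faithful, and the Drinfeld dg quotient of $\cD^b_\dg(\mathrm{gr}(A))$ by the full dg subcategory spanned by its image has homotopy category the Verdier quotient $\cD^b(\mathrm{gr}(A))/\cD^b_{\mathrm{tors}}(\mathrm{gr}(A))$, which by (a) is $\cD^b(\mathrm{qgr}(A))=\mathrm{Ho}(\cD^b_\dg(\mathrm{qgr}(A)))$ compatibly with the comparison dg functor; as the latter is then a quasi-equivalence onto a pretriangulated dg category, we obtain the asserted short exact sequence in the sense of Drinfeld/Keller.

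For \textbf{(a)} I would invoke the standard theory of derived categories of quotient abelian categories (Miyachi, Keller): for a Serre subcategory $\cB$ of an abelian category $\cA$, the exact functor $\cA\to\cA/\cB$ induces an equivalence $\cD^b(\cA)/\cD^b_\cB(\cA)\xrightarrow{\sim}\cD^b(\cA/\cB)$; this applies with $\cA=\mathrm{gr}(A)$ and $\cB=\mathrm{tors}(A)$, and is the noncommutative analogue of the familiar description of $\cD^b(\mathrm{coh})$ of an open subscheme as a Verdier quotient. For \textbf{(b)}, essential surjectivity is a d\'evissage: using the canonical truncation triangles, a bounded complex with torsion cohomology is built by finitely many cones out of shifts of its cohomology modules, which lie in $\mathrm{tors}(A)$; hence the triangulated subcategory generated by the image of $\mathrm{tors}(A)$ is exactly $\cD^b_{\mathrm{tors}}(\mathrm{gr}(A))$. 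The substance of (b) is therefore the full faithfulness, i.e. that the natural map $\mathrm{Ext}^\ast_{\mathrm{tors}(A)}(T,T')\to\mathrm{Ext}^\ast_{\mathrm{gr}(A)}(T,T')$ is an isomorphism for all $T,T'\in\mathrm{tors}(A)$.

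This is the step I expect to be the main obstacle, and it is exactly where the hypotheses on $A$ intervene — the relevant ones being Noetherianness (for the Serre structure) and connectedness together with local finite-dimensionality (for the degree estimate below); finite global dimension of $A$ is not needed here. The map is visibly an isomorphism in degrees $0$ and $1$ (an extension in $\mathrm{gr}(A)$ of a torsion module by a torsion module is again torsion), but for a general Serre subcategory it may fail in higher degrees. One uses that, $A$ being Noetherian, every object of $\mathrm{tors}(A)$ is finite-dimensional, hence annihilated by $A_{\geq n}$ for some $n$ and so a module over the finite-dimensional algebra $A/A_{\geq n}$; thus $\mathrm{tors}(A)=\bigcup_n\mathrm{gr}(A/A_{\geq n})$ is an increasing union of Serre subcategories, whence $\mathrm{Ext}^\ast_{\mathrm{tors}(A)}(T,T')=\operatorname{colim}_n\mathrm{Ext}^\ast_{\mathrm{gr}(A/A_{\geq n})}(T,T')$, and it remains to see that this colimit maps isomorphically onto $\mathrm{Ext}^\ast_{\mathrm{gr}(A)}(T,T')$. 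For this one exploits that, $A$ being connected and locally finite-dimensional, a minimal graded free resolution $P_\bullet\to T$ over $A$ of the finite-dimensional module $T$ has $P_j$ generated in degrees $\geq j+a$, where $a$ is the lowest degree of $T$. Since $T'$ is finite-dimensional, $\Hom_{\mathrm{gr}(A)}(P_j,T')$ vanishes for $j$ large, so $\RHom_{\mathrm{gr}(A)}(T,T')$ is computed by a bounded complex; and in each fixed cohomological degree this same computation is already carried out over $A/A_{\geq n}$ once $n$ exceeds the finitely many degrees that intervene. (In particular $\mathrm{tors}(A)$ has finite cohomological dimension, consistent with $A$ having finite global dimension.) Combining the full faithfulness in (b) with (a) and the formal argument of the first paragraph yields the short exact sequence of dg categories.
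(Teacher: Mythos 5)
Your overall architecture --- reduce to Verdier-exactness of the underlying triangulated sequence via Keller/Drinfeld, then split into (a) identifying the Verdier quotient with $\cD^b(\mathrm{qgr}(A))$ and (b) full faithfulness of $\cD^b(\mathrm{tors}(A))\to\cD^b(\mathrm{gr}(A))$ --- is sound, and you correctly isolate full faithfulness as the crux. But your argument for (b) has a genuine gap at its last step. You reduce to showing that $\operatorname{colim}_n\mathrm{Ext}^j_{\mathrm{gr}(A/A_{\geq n})}(T,T')\to\mathrm{Ext}^j_{\mathrm{gr}(A)}(T,T')$ is an isomorphism, and justify this by saying that the computation via a minimal graded free resolution $P_\bullet\to T$ over $A$ ``is already carried out over $A/A_{\geq n}$'' once $n$ is large. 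It is not: the $P_j$ are free $A$-modules, hence neither objects of $\mathrm{gr}(A/A_{\geq n})$ nor projective in $\mathrm{tors}(A)$ (which has no nonzero projectives), so this resolution computes nothing over $A/A_{\geq n}$. Moreover, for any \emph{fixed} $n$ the two Ext groups genuinely differ in degrees $\geq 2$: take $A=k[x]$, $T=k$, $T'=k(-n)$; then $\mathrm{Ext}^2_{\mathrm{gr}(A)}(k,k(-n))=0$ since $A$ has global dimension $1$, while the minimal resolution $\cdots\to B(-n)\xrightarrow{\;x^{n-1}}B(-1)\xrightarrow{\;x\;}B\to k$ over $B=k[x]/x^n$ gives $\mathrm{Ext}^2_{\mathrm{gr}(B)}(k,k(-n))=k$. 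So the comparison can only hold after passing to the colimit (in this example the offending class dies in $\mathrm{Ext}^2_{\mathrm{gr}(k[x]/x^{n+1})}$), and proving that all such excess classes are killed by the transition maps is precisely the content that is missing; your degree-bound argument says nothing about it. A secondary soft spot: step (a) is not an unconditional ``standard theorem'' about Serre quotients --- Miyachi's theorem assumes the quotient functor admits an adjoint, which fails at the level of finitely generated modules because the section functor of $\mathrm{qgr}(A)$ does not preserve finite generation.

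The paper sidesteps both issues by invoking a single criterion (Keller, ``On the cyclic homology of exact categories'', Lem.~1.15; cf.\ Grothendieck, SGA~5, Exp.~VIII) that delivers (a) and (b) simultaneously: it suffices that every short exact sequence $0\to L\to M\to N\to 0$ in $\mathrm{gr}(A)$ with $L$ torsion maps, compatibly on $L$, to a short exact sequence of torsion modules. This is verified in two lines: choose $m$ larger than the top degree of the finite-dimensional module $L$, so that $L\cap M_{\geq m}=0$, and map the given sequence to $0\to L\to M/M_{\geq m}\to M/\langle M_{\geq m}+L\rangle\to 0$. Any repair of your colimit argument would in effect have to establish an approximation statement of exactly this type, so you may as well use the criterion directly.
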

\begin{proof}
As explained by Keller in \cite[Thm.~4.11]{ICM-Keller}, \eqref{eq:sequence} is a short exact sequence of dg categories if and only if the associated sequence of triangulated categories
\begin{equation}\label{eq:seq-triangulated}
\cD^b(\mathrm{tors}(A)) \too \cD^b(\mathrm{gr}(A)) \too \cD^b(\mathrm{qgr}(A))
\end{equation}
is exact sequence in the sense of Verdier. By definition, we have a short exact sequence of abelian categories $0 \to \mathrm{tors}(A) \to \mathrm{gr}(A) \to \mathrm{qgr}(A) \to 0$. Therefore, thanks to \cite[Lem.~1.15]{Exact} (consult also \cite{Grothendieck}), in order to show that \eqref{eq:seq-triangulated} is exact in the sense of Verdier, it suffices to prove the following condition: given a short exact sequence $0 \to L \to M \to N \to 0$ in the abelian category $\mathrm{gr}(A)$, with $L \in \mathrm{tors}(A)$, there exists a morphism of short exact sequences
$$
\xymatrix{
0 \ar[r] & L \ar@{=}[d] \ar[r] & M \ar[r] \ar[d] & N \ar[r] \ar[d] & 0 \\
0 \ar[r] & L \ar[r] & L' \ar[r] & L''\ar[r] & 0
}
$$
with $L'$ and $L''$ belonging to $\mathrm{tors}(A)$. Recall that the category $\mathrm{tors}(A)$ of torsion $A$-modules is defined as the full subcategory of $\mathrm{gr}(A)$ consisting of those $\bbZ$-graded (right) $A$-modules which are (globally) finite-dimensional over $k$. Given a $\bbZ$-graded (right) $A$-module $M$ and an integer $m \in \bbZ$, let us write $M_{\geq m}$ for the (right) $A$-submodule $\bigoplus_{n \geq m} M_n$ of $M$. Since by assumption $L$ is torsion and $M$ is finitely generated, there exists an integer $m \gg 0$ such that $L \cap M_{\geq m}=0$. Consequently, we can construct the following morphism of short exact sequences
$$
\xymatrix{
0 \ar[r] & L \ar@{=}[d] \ar[r] & M \ar[r] \ar[d] & N \ar[r] \ar[d] & 0 \\
0 \ar[r] & L \ar[r] & M/M_{\geq m} \ar[r] & M/\langle M_{\geq m}+L\rangle\ar[r] & 0\,.
}
$$
The proof follows now from the fact that, by construction, the $\bbZ$-graded (right) $A$-modules $M/M_{\geq m}$ and $M/\langle M_{\geq m}+L\rangle$ belong to $\mathrm{tors}(A)$.
\end{proof}
\begin{remark}\label{rk:triangle}
By assumption, the functor $E$ is localizing. Therefore, the short exact exact sequence of dg categories \eqref{eq:sequence} gives rise to a distinguished triangle:
$$
E(\cD^b_\dg(\mathrm{tors}(A))) \too E(\cD^b_\dg(\mathrm{gr}(A))) \too E(\cD^b_\dg(\mathrm{qgr}(A)))\stackrel{\partial}{\too} \Sigma E(\cD^b_\dg(\mathrm{tors}(A)))\,. $$
\end{remark}
Since $A$ has finite global dimension, the inclusion of categories $\mathrm{grproj}(A) \subset \mathrm{gr}(A)$ induces a Morita equivalence $\cD^b_\dg(\mathrm{grproj}(A)) \to \cD^b_\dg(\mathrm{gr}(A))$. Therefore, by first using the general Proposition \ref{prop:pgr} (with $B=A$) and then by applying the functor $E$ to the preceding Morita equivalence, we obtain an induced isomorphism
\begin{equation}\label{eq:induced2}
\oplus_{-\infty}^{+ \infty} E(k) \simeq E(\cD^b_\dg(\mathrm{grproj}(A))) \stackrel{\simeq}{\too} E(\cD^b_\dg(\mathrm{gr}(A)))\,.
\end{equation}
\begin{proposition}
We have a Morita equivalence
\begin{equation}\label{eq:BGG}
\cD^b_\dg(\mathrm{tors}(A)) \too \cD^b_\dg(\mathrm{grproj}(A^!))\,,
\end{equation}
where $A^!$ stands for the Koszul dual $k$-algebra of $A$.
\end{proposition}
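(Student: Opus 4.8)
\noindent\emph{Proof proposal.} The plan is to realise \eqref{eq:BGG} as an instance of (classical) Koszul duality, by comparing dg categories of generators on the two sides. Since $A$ is connected, the simple objects of $\mathrm{gr}(A)$ are precisely the twists $k(n)$, $n\in\bbZ$, so every finite-dimensional graded $A$-module admits a finite filtration with subquotients among the $k(n)$; hence $\{k(n)\}_{n\in\bbZ}$ classically generates the (idempotent-complete) triangulated category $\cD^b(\mathrm{tors}(A))$. Dually, since $A^!$ is connected, every finitely generated graded projective $A^!$-module is a finite direct sum of twists $A^!(n)$, and the exact structure on $\mathrm{grproj}(A^!)$ is split (every short exact sequence of projectives being split), so $\cD^b_\dg(\mathrm{grproj}(A^!))$ is the dg category of bounded complexes of such modules up to homotopy and is classically generated by $\{A^!(n)\}_{n\in\bbZ}$. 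By the Morita theory of dg categories it therefore suffices to produce a quasi-equivalence between the full dg subcategory $\cE\subset\cD^b_\dg(\mathrm{tors}(A))$ on the objects $\{k(n)[-n]\}_{n\in\bbZ}$ and the full dg subcategory $\cE'\subset\cD^b_\dg(\mathrm{grproj}(A^!))$ on the objects $\{A^!(n)\}_{n\in\bbZ}$; passing to Morita closures then yields \eqref{eq:BGG}, a representative of which is the Beilinson--Ginzburg--Soergel correspondence (sending $k(n)$ to a cohomological shift of the free module $A^!(n)$).

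The dg category $\cE'$ is transparent: split exactness forces it to be concentrated in cohomological degree zero, with $\mathrm{Hom}_{\cE'}(A^!(n),A^!(m))=(A^!)_{m-n}$ and composition the multiplication of $A^!$. To treat $\cE$ I would first compute its cohomology. Koszulity of $A$ says exactly that the minimal graded projective resolution of $k$ is linear, its $i$-th term being pure of internal degree $i$; consequently $\mathrm{Ext}^i_A(k,k(\ell))$ vanishes unless $\ell=-i$, in which case it has dimension $\beta_i=\dim_k(A^!)_i$, and $\bigoplus_i\mathrm{Ext}^i_A(k,k)$ equipped with the Yoneda product is the graded algebra $A^!$. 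Tracking cohomological degrees, it follows that after the twist $k(n)\rightsquigarrow k(n)[-n]$ the complex $\mathrm{Hom}_\cE(k(n)[-n],k(m)[-m])$ has cohomology concentrated in degree zero, canonically equal to $(A^!)_{n-m}$, with composition again the Yoneda product; hence $H^0(\cE)$ is isomorphic, as a $k$-linear category, to $\cE'$ via the relabelling $k(n)[-n]\leftrightarrow A^!(-n)$.

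The one substantial point is then the formality of $\cE$, i.e.\ its quasi-equivalence with $H^0(\cE)$. Because every Hom-complex of $\cE$ has cohomology concentrated in a single degree, the good truncation $\tau^{\leq 0}\cE$ is a dg subcategory and $\tau^{\leq 0}\cE\hookrightarrow\cE$ is a quasi-equivalence, while the canonical projection $\tau^{\leq 0}\cE\twoheadrightarrow H^0(\cE)$ is again a quasi-equivalence; equivalently, transferring a minimal $A_\infty$-structure onto $H^0(\cE)$, the higher operations $m_j$ ($j\geq 3$) have degree $2-j<0$ and hence vanish, all morphism spaces lying in degree zero. Combining this with the previous paragraph yields $\cE\simeq\cE'$ and therefore the Morita equivalence \eqref{eq:BGG}. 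The remaining verifications — that graded $\mathrm{Ext}$ computed in $\mathrm{gr}(A)$ agrees with $\mathrm{Ext}$ computed in $\cD^b(\mathrm{tors}(A))$, that the Yoneda product matches the multiplication of $A^!$ on the nose, and that the cohomological shifts are bookkept consistently — are routine. Alternatively, one may simply invoke the Koszul duality equivalence $\cD^b(\mathrm{tors}(A))\simeq\cD^b(\mathrm{grproj}(A^!))$ of Beilinson--Ginzburg--Soergel and observe that it is compatible with the canonical dg enhancements.
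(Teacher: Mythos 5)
Your argument is correct, but it takes a genuinely different route from the paper. The paper simply restricts the Beilinson--Ginzburg--Soergel dg Koszul duality functor $\cD^{\downarrow}_\dg(\mathrm{Gr}(A)) \too \cD^{\uparrow}_\dg(\mathrm{Gr}(A^!))$, which is already a Morita equivalence sending $k(i)$ to $\Sigma^{-i}A^!(i)$, and then invokes the same generation statement you use (the smallest thick subcategories containing $\{k(i)\}$, resp.\ $\{A^!(i)\}$, are $\cD^b(\mathrm{tors}(A))$, resp.\ $\cD^b(\mathrm{grproj}(A^!))$; this is the paper's Lemma~\ref{lem:Orlov}, borrowed from Orlov). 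You instead reprove the relevant ``compact part'' of BGS duality from scratch: after the generation step you compute the dg endomorphism category of $\{k(n)[-n]\}$, use Koszulity (purity of $\mathrm{Ext}_A(k,k(\ell))$) to concentrate all Hom-complexes in degree zero, deduce formality, and match the result with the split, degree-zero dg category $\{A^!(n)\}$. What the paper's route buys is brevity, at the cost of citing the full strength of \cite{BGS} together with its dg enhancement; what yours buys is self-containedness modulo two standard inputs that you correctly flag but should not undersell: (a) the identification of the Yoneda algebra $\bigoplus_i\mathrm{Ext}^i_A(k,k)$ with $A^!$ (up to the usual $(-)^{\mathrm{op}}$ bookkeeping for right modules), and (b) the full faithfulness of $\cD^b(\mathrm{tors}(A))\to\cD^b(\mathrm{gr}(A))$, without which your $\mathrm{Ext}$ computation does not take place in the right category --- note that this is verified by exactly the same ``$L\cap M_{\geq m}=0$'' argument the paper uses for the localization sequence \eqref{eq:sequence}, so it is available. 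One last cosmetic point: your construction yields a zig-zag of Morita equivalences rather than a single dg functor as in \eqref{eq:BGG}; this is harmless for the paper's purposes, since $E$ is Morita invariant and hence factors through the Morita homotopy category, where a zig-zag of Morita equivalences still induces the isomorphism \eqref{eq:induced1}.
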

\begin{proof}
Given a $\bbN$-graded $k$-algebra $B=\bigoplus_{n \geq 0} B_n$, let us denote by $\mathrm{Gr}(B)$ the category of all $\bbZ$-graded (right) $B$-modules and by $\cD(\mathrm{Gr}(B))$ the associated (unbounded) derived category. Following Beilinson-Ginzburg-Soergel \cite[\S2.12]{BGS}, let $\cD^\downarrow(\mathrm{Gr}(B))$, resp. $\cD^\uparrow(\mathrm{Gr}(B))$, be the full subcategory of $\cD(\mathrm{Gr}(B))$ consisting of those cochain complexes of $\bbZ$-graded (right) $B$-modules $M$ such that for some integer $m \gg 0$ we have $M^q_n\neq 0 \Rightarrow (q\geq -m\,\,\mathrm{or}\,\, q+n\leq m)$, resp. $M^q_n\neq 0 \Rightarrow (q\leq -m\,\,\mathrm{or}\,\, q+n\geq - m)$. These categories admit canonical dg enhancements $\cD_\dg(\mathrm{Gr}(B))$, $\cD_\dg^{\downarrow}(\mathrm{Gr}(B))$, and $\cD_\dg^{\uparrow}(\mathrm{Gr}(B))$. Now, recall from \cite[Thm.~2.12.1]{BGS} (consult also \cite[\S2]{Floystad}) the construction of the Koszul duality dg functor $\cD_\dg(\mathrm{Gr}(A)) \to \cD_\dg(\mathrm{Gr}(A^!))$. As proved in {\em loc. cit.}, this dg functor restricts to a Morita equivalence 
\begin{equation}\label{eq:Morita}
\cD_\dg^{\downarrow}(\mathrm{Gr}(A)) \too \cD_\dg^{\uparrow}(\mathrm{Gr}(A^!))
\end{equation}
which sends the $\bbZ$-graded (right) $A$-modules $k(i), i \in \bbZ$, to the $\bbZ$-graded (right) $A^!$-modules $\Sigma^{-i} A^!(i), i \in \bbZ$. Therefore, making use of the general Lemma \ref{lem:Orlov} below (with $B=A$ and $B=A^!$), we conclude that \eqref{eq:Morita} restricts furthermore to the above Morita equivalence \eqref{eq:BGG}.
\end{proof}
\begin{lemma}\label{lem:Orlov}
Let $B=\bigoplus_{n\geq 0}B_n$ be a (connected and locally finite-dimensional) $\bbN$-graded Noetherian $k$-algebra. The smallest thick triangulated subcategory of $\cD^b(\mathrm{gr}(B))$ containing the $\bbZ$-graded (right) $B$-modules $\{k(i)\,|\,i \in \bbZ\}$, resp. $\{B(i)\,|\,i \in \bbZ\}$, agrees with $\cD^b(\mathrm{tors}(B))$, resp. $\cD^b(\mathrm{grproj}(B))$.
\end{lemma}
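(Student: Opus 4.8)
The plan is to treat the two assertions separately, since they are of different natures: the statement about $\{B(i)\}_{i\in\bbZ}$ and $\cD^b_\dg(\mathrm{grproj}(B))$ is essentially formal, while the one about $\{k(i)\}_{i\in\bbZ}$ and $\cD^b(\mathrm{tors}(B))$ requires a devissage argument using the Noetherian and connectedness hypotheses. For the projective case: every object of $\mathrm{grproj}(B)$ is a direct summand of a finite direct sum of shifts $B(i)$, so the smallest thick subcategory containing all the $B(i)$ already contains $\mathrm{grproj}(B)$, hence contains $\cD^b(\mathrm{grproj}(B))$ (a bounded complex is built from its terms by finitely many cones); conversely each $B(i)$ lies in $\cD^b(\mathrm{grproj}(B))$, which is thick inside $\cD^b(\mathrm{gr}(B))$ because $B$ having the property that finitely generated graded projectives form a full exact subcategory closed under summands makes $\cD^b(\mathrm{grproj}(B))\hookrightarrow \cD^b(\mathrm{gr}(B))$ the inclusion of a thick triangulated subcategory. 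So the two thick subcategories coincide.

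For the torsion case, write $\cT$ for the smallest thick triangulated subcategory of $\cD^b(\mathrm{gr}(B))$ containing all $k(i)$, $i\in\bbZ$. One inclusion is easy: each $k(i)$ is a finite-dimensional graded $B$-module, hence lies in $\mathrm{tors}(B)\subset\cD^b(\mathrm{tors}(B))$, and $\cD^b(\mathrm{tors}(B))$ is a thick triangulated subcategory of $\cD^b(\mathrm{gr}(B))$ (this uses that $\mathrm{tors}(B)$ is a Serre subcategory of the Noetherian abelian category $\mathrm{gr}(B)$, so the natural functor $\cD^b(\mathrm{tors}(B))\to\cD^b(\mathrm{gr}(B))$ is fully faithful with thick essential image — cf. the references already invoked in the paper, e.g. \cite{Exact}). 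Hence $\cT\subseteq\cD^b(\mathrm{tors}(B))$. For the reverse inclusion it suffices, since $\cD^b(\mathrm{tors}(B))$ is generated as a triangulated category by the objects of $\mathrm{tors}(B)$ sitting in degree $0$, to show that every finite-dimensional graded $B$-module $M$ lies in $\cT$. Here I would induct on $\dim_k M$. The key point is that any nonzero $M\in\mathrm{tors}(B)$ has a graded submodule isomorphic to some $k(i)$: indeed, pick a nonzero homogeneous element of $M$ of maximal degree, say of degree $-i$; since $B$ is connected ($B_0=k$) and positively graded, $B_{\geq 1}$ annihilates it, so it spans a graded $B$-submodule isomorphic to $k(i)$. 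Then $M/k(i)$ is torsion of strictly smaller dimension, so by induction $M/k(i)\in\cT$; combining $k(i)\in\cT$ with the short exact sequence $0\to k(i)\to M\to M/k(i)\to 0$ (a distinguished triangle in $\cD^b(\mathrm{gr}(B))$) and thickness of $\cT$, we get $M\in\cT$. This completes the reverse inclusion, hence $\cT=\cD^b(\mathrm{tors}(B))$.

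The main obstacle — really the only subtle point — is the justification that the natural functors $\cD^b(\mathrm{tors}(B))\to\cD^b(\mathrm{gr}(B))$ and $\cD^b(\mathrm{grproj}(B))\to\cD^b(\mathrm{gr}(B))$ are fully faithful embeddings onto thick subcategories; for the first this is the standard fact about derived categories of Serre subcategories (using that every surjection onto a torsion module from a finitely generated module can be refined, exactly as in the proof of the short exact sequence \eqref{eq:sequence} above, and that $\mathrm{gr}(B)$ is Noetherian), and for the second it follows from finite global dimension being \emph{not} needed here — one only needs that $\mathrm{grproj}(B)$ is closed under kernels of epimorphisms between its objects in the relevant range, which for the thickness statement reduces to the elementary summand description of graded projectives. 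Everything else is a routine devissage, and I expect the write-up to be short once the embedding statements are cited from \cite{Exact} (and \cite{Grothendieck}), exactly as done earlier in the paper.
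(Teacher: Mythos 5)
Your argument is correct and complete; the paper itself gives no argument here, simply citing the proof of Orlov's Lemma~2.3, which runs exactly the d\'evissage you describe (for the torsion case, induction on $\dim_k M$ using that a top-degree homogeneous element of a torsion module spans a copy of some $k(i)$ because $B_0=k$; for the projective case, graded freeness/summand description of objects of $\mathrm{grproj}(B)$ plus the stupid filtration of a bounded complex). The only point you gloss over is why the essential image of $\cD^b(\mathrm{grproj}(B))$ in $\cD^b(\mathrm{gr}(B))$ is closed under direct summands --- this is the idempotent completeness of the bounded homotopy category of finitely generated graded projectives (equivalently, that direct summands of compact objects of $\cD(\mathrm{Gr}(B))$ are compact), a standard but not entirely elementary fact that deserves an explicit citation.
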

\begin{proof}
Consult the proof of \cite[Lem.~2.3]{Orlov}.
\end{proof}
Recall that since $A$ is connected, its Koszul dual $k$-algebra $A^!$ is also connected. Therefore, by first applying the functor $E$ to \eqref{eq:BGG} and then by using the above general Proposition \ref{prop:pgr} (with $B=A^!$), we obtain an induced isomorphism
\begin{equation}\label{eq:induced1}
E(\cD^b_\dg(\mathrm{tors}(A))) \stackrel{\simeq}{\too} E(\cD^b_\dg(\mathrm{grproj}(A^!)))\simeq \oplus_{-\infty}^{+ \infty} E(k)\,.
\end{equation}
Since $A$ is Koszul and of finite global dimension $d$, we have a linear free resolution
\begin{equation}\label{eq:resolution}
0 \too A(-d)^{\oplus \beta_d} \too \cdots \too A(-2)^{\oplus \beta_2} \too A(-1)^{\oplus \beta_1} \too A \too k \too 0
\end{equation}
of the $\bbZ$-graded (right) $A$-module $k$. As mentioned in \S\ref{sec:intro}, the integer $\beta_i$ agrees with the dimension of the $k$-vector space $\mathrm{Tor}^A_i(k,k)$ (or $\mathrm{Ext}_A^i(k,k)$).
\begin{proposition}\label{prop:triangle}
Under the above isomorphisms \eqref{eq:induced2} and \eqref{eq:induced1}, the distinguished triangle of Remark \ref{rk:triangle} identifies with
\begin{equation}\label{eq:triangle2}
\oplus^{+\infty}_{-\infty} E(k) \stackrel{\mathrm{M}'}{\too} \oplus^{+\infty}_{-\infty} E(k) \too E(\cD^b_\dg(\mathrm{qgr}(A))) \stackrel{\partial}{\too} \oplus^{+\infty}_{-\infty} \Sigma E(k) \,,
\end{equation}
where $\mathrm{M}'$ stands for the (infinite) matrix $\mathrm{M}'_{ij}:= (-1)^j (-1)^{(i-j)} \beta'_{i-j}$.
%
\end{proposition}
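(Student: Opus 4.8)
The plan is to identify the first morphism of the distinguished triangle of Remark~\ref{rk:triangle}; since a distinguished triangle is determined, up to isomorphism, by any one of its morphisms, it then suffices to check that this first morphism corresponds to $\mathrm{M}'$ under the isomorphisms \eqref{eq:induced2} and \eqref{eq:induced1}, after which the term $E(\cD^b_\dg(\mathrm{qgr}(A)))$ is carried along unchanged and the fourth term becomes $\bigoplus_{-\infty}^{+\infty}\Sigma E(k)$ via the suspension of \eqref{eq:induced1}. The first morphism of the triangle of Remark~\ref{rk:triangle} is $E(\iota)$, where $\iota\colon \mathrm{tors}(A)\hookrightarrow \mathrm{gr}(A)$ denotes the inclusion; so the task is to compute the conjugate of $E(\iota)$ by \eqref{eq:induced1} and \eqref{eq:induced2}, which I would do by tracking a standard basis vector $e_j$ of $\bigoplus_{-\infty}^{+\infty}E(k)$ through the inverse of \eqref{eq:induced1}, then through $E(\iota)$, then through the inverse of \eqref{eq:induced2}.

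First I would record what the two isomorphisms do on basis vectors. By the proof of Proposition~\ref{prop:pgr} (with $B=A$) together with the Morita equivalence $\cD^b_\dg(\mathrm{grproj}(A))\to \cD^b_\dg(\mathrm{gr}(A))$, the isomorphism \eqref{eq:induced2} sends $e_i$ to the image in $E(\cD^b_\dg(\mathrm{gr}(A)))$ of the free $\bbZ$-graded module $A(-i)$ generated in degree $i$. Applying the same analysis to $B=A^!$ and combining it with the Koszul duality equivalence of \eqref{eq:Morita}, which sends $k(i)$ to $\Sigma^{-i}A^!(i)$ — so, read backwards, $A^!(-j)$ to $\Sigma^{-j}k(-j)$ — the inverse of \eqref{eq:induced1} sends $e_j$ to the image in $E(\cD^b_\dg(\mathrm{tors}(A)))$ of the object $\Sigma^{-j}k(-j)$. (Here one should check that the grading conventions of Proposition~\ref{prop:pgr} applied to $A$ and to $A^!$ are matched up consistently through the equivalence \eqref{eq:BGG}.)

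Next I would evaluate $E(\iota)$ on the generator $\Sigma^{-j}k(-j)$. The suspension $\Sigma^{-j}$ contributes the sign $(-1)^j$, because the shift auto-equivalence of $\cD^b_\dg(\mathrm{proj}(k))$ is sent by the additive functor $E$ to $-\id_{E(k)}$. For $k(-j)$ itself I would use that, $A$ being Koszul of finite global dimension $d$, the $(-j)$-twist of the linear free resolution \eqref{eq:resolution} presents $k(-j)$, inside $\cD^b_\dg(\mathrm{grproj}(A))\simeq \cD^b_\dg(\mathrm{gr}(A))$, as the complex $[A(-j-d)^{\oplus\beta_d}\to\cdots\to A(-j-1)^{\oplus\beta_1}\to A(-j)]$ with $A(-j-l)^{\oplus\beta_l}$ sitting in homological degree $l$. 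Peeling off the syzygies one short exact sequence at a time and applying the additivity Proposition~\ref{prop:additive} repeatedly — in the spirit of the proof of Lemma~\ref{lem:aux}, tensoring the resulting fixed short exact sequences of $A$-modules with a finitely generated projective $k$-module so as to obtain short exact sequences of exact functors $\mathrm{proj}(k)\to \mathrm{gr}(A)$ — the image of $k(-j)$ equals $\sum_{l=0}^d(-1)^l\beta_l$ times the image of $A(-j-l)$, the multiplicities $\beta_l$ acting through their images $\beta'_l$ in $R$ since $\cT$ is $R$-linear. Identifying $A(-j-l)$ with $e_{j+l}$ through \eqref{eq:induced2}, this yields $\mathrm{M}'(e_j)=\sum_{l=0}^d(-1)^j(-1)^l\beta'_l\,e_{j+l}$, that is $\mathrm{M}'_{ij}=(-1)^j(-1)^{i-j}\beta'_{i-j}$ (with the conventions $\beta'_0=1$ and $\beta'_l=0$ for $l<0$ or $l>d'$), which is the asserted matrix.

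I expect the principal obstacle to lie in the middle step: exhibiting the linear resolution \eqref{eq:resolution} as an honest iterated extension to which Proposition~\ref{prop:additive} genuinely applies, and then pinning down all the signs and grading twists — in particular the factor $(-1)^j$ produced by the suspension built into the Koszul duality functor, and the factor $(-1)^{i-j}$ produced by the homological position of the $\beta_{i-j}$ free summands in the resolution. A secondary delicate point is ensuring that the two applications of Proposition~\ref{prop:pgr} (to $A$ and to $A^!$) use compatible grading conventions, so that the twist $A^!(i)\leftrightarrow \Sigma^{-i}k(i)$ supplied by \eqref{eq:Morita} is correctly folded into the description of the isomorphism \eqref{eq:induced1}.
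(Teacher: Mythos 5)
Your argument is correct, but it takes a genuinely different route from the paper's. You compute the conjugated morphism directly at the level of $E$: restricting to the $j^{\mathrm{th}}$ summand, observing that this restriction is $E$ of an explicit dg functor sending $k$ to $\Sigma^{-j}k(-j)$, and then decomposing that functor by peeling off the syzygies of the twisted resolution \eqref{eq:resolution} via repeated applications of Proposition \ref{prop:additive}, together with the standard fact that a Morita-invariant localizing functor sends the shift to $-\id$ (this last fact is the one ingredient you should cite or prove explicitly, since it supplies the factor $(-1)^j$). The paper instead first reduces to the universal localizing invariant $U\colon \dgcat(k)\to \NMot(k)$, using that any $E$ factors as $\overline{E}\circ U$ with $\overline{E}$ preserving arbitrary direct sums; there the corepresentability $\Hom_{\NMot(k)}(U(k),U(\cA))\simeq K_0(\cA)$ and the compactness of $U(k)$ turn the morphism into an honest column-finite integer matrix, whose entries are read off from the identity $[k(-j)]=\sum_{i=0}^{d}(-1)^i\beta_i[A(-i-j)]$ in $K_0(\cD^b(\mathrm{gr}(A)))$, immediate from \eqref{eq:resolution}. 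Your approach is more self-contained, relying only on Proposition \ref{prop:additive} (which the paper proves anyway) at the price of carrying out the functor-level additivity and sign bookkeeping by hand; the paper's reduction to $\NMot(k)$ makes the matrix description and the signs automatic at the cost of invoking the corepresentability theorem. Both routes treat the passage from the integer multiplicities $\beta_l$ to their images $\beta'_l$ in $R$ in the same way, and both identify the basis vectors with $\Sigma^{-j}k(-j)$ and $A(-i)$ consistently with the grading conventions of Proposition \ref{prop:pgr} and the Koszul duality equivalence \eqref{eq:BGG}.
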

\begin{proof}
Let $\NMot(k)$ be the category of {\em noncommutative motives} constructed in \cite[\S 8.2]{book}; denoted by $\NMot(k)_{\mathrm{loc}}$ in {\em loc. cit.} By construction, this triangulated category comes equipped with a functor $U\colon \dgcat(k) \to \NMot(k)$ which is initial among all the functors satisfying conditions (i)-(iii). Concretely, given a functor $E\colon \dgcat(k) \to \cT$ satisfying conditions (i)-(iii), there exists a (unique) triangulated functor $\overline{E}\colon \NMot(k) \to \cT$ such that $\overline{E}\circ U \simeq E$. Moreover, $\overline{E}$ preserves arbitrary direct sums; see \cite[Thm.~8.5]{book}. This implies that in order to prove Theorem \ref{prop:triangle}, it suffices to show that the triangle of Remark \ref{rk:triangle} (with $E=U$) identifies with
\begin{equation}\label{eq:triangle22}
\oplus^{+\infty}_{-\infty} U(k) \stackrel{\mathrm{M}}{\too} \oplus^{+\infty}_{-\infty} U(k) \too U(\cD^b_\dg(\mathrm{qgr}(A))) \stackrel{\partial}{\too} \oplus^{+\infty}_{-\infty} \Sigma U(k) \,,
\end{equation}
where $\mathrm{M}$ stands for the (infinite) matrix $\mathrm{M}_{ij}:= (-1)^j (-1)^{(i-j)} \beta_{i-j}$. Recall from \cite[\S8.6]{book} that, for every dg category $\cA$, we have a natural isomorphism 
$$\Hom_{\NMot(k)}(U(k),U(\cA))\simeq K_0(\cA)\,.$$ 
Moreover, $U(k)$ is a compact object of the triangulated category $\NMot(k)$. Therefore, since $K_0(k)\simeq \bbZ$, an endomorphism of $\oplus_{-\infty}^{+ \infty} U(k)$ corresponds to an infinite matrix with integer coefficients in which every column has solely a finite number of non-zero entries. Let us denote by $\mathrm{M}$ the matrix corresponding to $U(\cD^b_\dg(\mathrm{tors}(A))) \to U(\cD^b_\dg(\mathrm{gr}(A)))$ under the isomorphisms \eqref{eq:induced2} and \eqref{eq:induced1} (with $E=U$). By applying the functor $\Hom_{\NMot(k)}(U(k),-)$ to the isomorphisms \eqref{eq:induced2} and \eqref{eq:induced1} (with $E=U$), we obtain induced abelian group isomorphisms
\begin{equation}\label{eq:iso2}
\oplus_{- \infty}^{+\infty} \bbZ \simeq K_0(\cD^b(\mathrm{grproj}(A))) \stackrel{\simeq}{\too} K_0(\cD^b(\mathrm{gr}(A)))
\end{equation}
\begin{equation}\label{eq:iso1}
K_0(\cD^b(\mathrm{tors}(A))) \stackrel{\simeq}{\too} K_0(\cD^b(\mathrm{grproj}(A^!))) \simeq \oplus_{- \infty}^{+\infty} \bbZ\,.
\end{equation}
The element $1 \in \bbZ$, placed at the $j^{\mathrm{th}}$ component of the direct sum $\oplus_{-\infty}^{+\infty} \bbZ$, corresponds under \eqref{eq:iso1} to the Grothendieck class $[\Sigma^{-j}k(-j)] = (-1)^j [k(-j)] \in K_0(\cD^b(\mathrm{tors}(A)))$. In the same vein, the element $1\in \bbZ$, placed at the $i^{\mathrm{th}}$ component of the direct sum $\oplus_{-\infty}^{+ \infty}\bbZ$, corresponds under \eqref{eq:iso2} to the Grothendieck class $[A(-i)] \in K_0(\cD^b(\mathrm{gr}(A)))$. Thanks to the above linear free resolution \eqref{eq:resolution}, we have moreover the following equality $[k(-j)] = \sum_{i=0}^d (-1)\beta_i [A(-i-j)]$ in the Grothendieck group $K_0(\cD^b(\mathrm{gr}(A)))$. The above considerations allow us to conclude that the $(i,j)^{\mathrm{th}}$ entry of the matrix $\mathrm{M}$ is given by the integer $(-1)^j (-1)^{(i-j)} \beta_{i-j}$. This finishes the proof.
\end{proof}
We now have all the ingredients necessary for the conclusion of the proof of Theorem \ref{thm:main}(i). Let $o\in \cT$ be a compact object. By applying the functor $\Hom_\cT(o,-)$ to the triangle \eqref{eq:triangle2}, we obtain an induced long exact sequence of $R$-modules:
\begin{equation*}\label{eq:long}
\cdots \to \oplus^{+\infty}_{-\infty} E^o_m(k) \stackrel{\mathrm{M}'}{\too} \oplus^{+\infty}_{-\infty} E^o_m(k) \to E^o_m(\cD^b_\dg(\mathrm{qgr}(A))) \stackrel{\partial}{\to} \oplus^{+\infty}_{-\infty} E^o_{m-1}(k)\to \cdots\,.
\end{equation*}
Since $\mathrm{M}'_{ij}=(-1)^j(-1)^{(i-j)} \beta'_{i-j}$, with $\beta'_0=1$ and $\beta'_r=0$ whenever $r \notin \{0, \ldots, d'\}$, a simple matrix computation shows that the preceding homomorphism $\mathrm{M}'$ of $R$-modules is injective. Consequently, the long exact sequence breaks-up into short exact sequences of $R$-modules:
\begin{equation}\label{eq:short}
0 \too \oplus^{+\infty}_{-\infty} E^o_m(k) \stackrel{\mathrm{M}'}{\too} \oplus^{+\infty}_{-\infty} E^o_m(k) \too E^o_m(\mathrm{qgr}(A)) \too 0\,.
\end{equation}
Thanks to Lemma \ref{lem:final} below and to the definition of the homomorphism $\phi$ (see below), we also have the following short exact sequences of $R$-modules:
\begin{equation*}\label{eq:short1}
0 \to R[t,t^{-1}] \otimes_R E^o_m(k) \stackrel{\phi \otimes \id}{\to} R[t,t^{-1}]\otimes_R E^o_m(k) \to R[t]/\langle h'_A(t)^{-1}\rangle \otimes_R E^o_m(k) \to 0\,.
\end{equation*}
Now, consider the Poincar\'e polynomial $p_A(t):=\sum^d_{i=0} (-1)^i \beta_i t^i$ (and $p'_A(t):=\sum_{i=0}^{d'} (-1)^i \beta'_i t_i$). Thanks to the linear free resolution \eqref{eq:resolution}, we have $h_A(t)^{-1}=p_A(t)$ (and $h'_A(t)^{-1}=p'_A(t)$). This implies that under the canonical isomorphism between $\oplus_{-\infty}^{+\infty} E^o_m(k)$ and $R[t,t^{-1}]\otimes_R E^o_m(k)$, the matrix $\mathrm{M}'$ corresponds to the homomorphism $\phi\otimes \id$. Consequently, we obtain induced $R$-module isomorphisms 
\begin{eqnarray}\label{eq:induced-22}
E^o_m(\cD^b_\dg(\mathrm{qgr}(A))) \stackrel{\simeq}{\too} R[t]/\langle h'_A(t)^{-1}\rangle \otimes_R E^o_m(k) && m \in \bbZ\,.
\end{eqnarray}
This concludes the proof of Theorem \ref{thm:main}(i).
\begin{lemma}\label{lem:final}
We have the following short exact sequence of $R$-modules
$$ 0 \too R[t, t^{-1}] \stackrel{\phi}{\too} R[t,t^{-1}] \too R[t]/\langle h'_A(t)^{-1}\rangle \too 0\,,$$
where $\phi$ stands for the homomorphism $p(t) \mapsto p(-t)\cdot h'_A(t)^{-1}$.
\end{lemma}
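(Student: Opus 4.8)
The plan is to make the map $\phi$ and its image explicit and then to identify the two quotient rings that appear. Throughout write $q:=h'_A(t)^{-1}=1-\beta'_1t+\cdots+(-1)^{d'}\beta'_{d'}t^{d'}\in R[t]$; the feature of $q$ that drives the argument is that its constant term $q(0)$ equals $1$, hence is a unit in $R$, although the leading coefficient $(-1)^{d'}\beta'$ need not be. First I would note that the substitution $t\mapsto -t$ is an involutive $R$-algebra automorphism of $R[t,t^{-1}]$, so $p(t)\mapsto p(-t)$ is a bijection of $R[t,t^{-1}]$ onto itself; hence $\operatorname{im}(\phi)=q\cdot R[t,t^{-1}]$, the principal ideal generated by $q$. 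For injectivity of $\phi$ it then suffices to check that $q$ is a non-zero-divisor in $R[t,t^{-1}]$: if $r\cdot q=0$ with $r=\sum_{i=a}^{b}r_it^i$ and $r_a\neq 0$, then since $q$ involves only non-negative powers of $t$, the coefficient of $t^a$ in $r\cdot q$ is $r_a\,q(0)=r_a\neq0$, a contradiction.

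It remains to exhibit a natural isomorphism $R[t,t^{-1}]/\langle q\rangle\isotoo R[t]/\langle q\rangle$; composing the quotient map $R[t,t^{-1}]\twoheadrightarrow R[t,t^{-1}]/\langle q\rangle$ with it produces a surjection $R[t,t^{-1}]\to R[t]/\langle q\rangle$ whose kernel is $q\cdot R[t,t^{-1}]=\operatorname{im}(\phi)$, which is exactly the short exact sequence claimed. The key observation is that the class of $t$ is invertible in $R[t]/\langle q\rangle$: the relation $q\equiv0$ rearranges to $1\equiv t\cdot(\beta'_1-\beta'_2t+\cdots+(-1)^{d'-1}\beta'_{d'}t^{d'-1})$. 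Hence, by the universal property of the localization $R[t]\hookrightarrow R[t][t^{-1}]=R[t,t^{-1}]$, the quotient map $R[t]\to R[t]/\langle q\rangle$ extends uniquely to $R[t,t^{-1}]\to R[t]/\langle q\rangle$, and since this extension kills $q$ it descends to a map $R[t,t^{-1}]/\langle q\rangle\to R[t]/\langle q\rangle$; together with the map induced by the inclusion $R[t]\hookrightarrow R[t,t^{-1}]$ it forms a pair of mutually inverse homomorphisms, since both composites fix the class of $t$, which generates each of these rings as an $R$-algebra.

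All of this is elementary; the only point that needs genuine care is the non-zero-divisor step, because $R$ is an arbitrary commutative ring (possibly with zero divisors or nilpotents) and $q$ is not monic, so one must use the \emph{constant} coefficient $q(0)=1$ rather than any property of the top coefficient $\beta'$ — the latter being exactly why part (i) of Theorem~\ref{thm:main} holds over any $R$ whereas part (ii) needs $1/\beta'\in R$.
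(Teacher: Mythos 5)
Your proof is correct and follows essentially the same route as the paper's: injectivity of $\phi$ from the fact that the constant term of $h'_A(t)^{-1}$ is $1$, identification of $\operatorname{im}(\phi)$ with the principal ideal $\langle h'_A(t)^{-1}\rangle$ via the automorphism $t\mapsto -t$, and the isomorphism $R[t,t^{-1}]/\langle h'_A(t)^{-1}\rangle\simeq R[t]/\langle h'_A(t)^{-1}\rangle$ from the invertibility of $t$ in the quotient. You simply supply the details (the lowest-coefficient non-zero-divisor argument and the localization universal property) that the paper leaves implicit.
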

\begin{proof}
Since $h'_A(0)^{-1}=1$, the homomorphism $\phi$ is injective. Moreover, we have the following natural isomorphisms
$$ \mathrm{cokernel}(\phi)= R[t,t^{-1}]/\mathrm{Im}(\phi) \stackrel{(a)}{\simeq} R[t,t^{-1}]/\langle h'_A(t)^{-1}\rangle \stackrel{(b)}{\simeq} R[t]/\langle h'_A(t)^{-1}\rangle\,,$$
where $(a)$ follows from the fact that the homomorphisms $\phi$ and $-\cdot h'_A(t)^{-1}$ have the same image, and $(b)$ from the fact that the polynomial $t$ is invertible in $R[t]/\langle h'_A(t)^{-1}\rangle$ (this follows from the fact that $h'_A(0)^{-1}=1$). This concludes the proof.
\end{proof}
We now have all the ingredients necessary for the conclusion of the proof of Theorem \ref{thm:main}(ii). Consider the following composition 
\begin{equation}\label{eq:induced}
\oplus_{n=0}^{d'-1} E(k)\too \oplus_{-\infty}^{+\infty} E(k) \too E(\cD^b_\dg(\mathrm{qgr}(A)))\,.
\end{equation}
By assumption, the triangulated category $\cT$ is compactly generated. Therefore, the morphism \eqref{eq:induced} is invertible if and only if for every compact object $o \in \cT$ the induced $R$-module homomorphisms
\begin{eqnarray}\label{eq:induced11}
\oplus_{n=0}^{d'-1} E^o_m(k) \too E^o_m(\cD^b_\dg(\mathrm{qgr}(A))) && m \in \bbZ
\end{eqnarray}
are invertible. Under the canonical identification $\oplus_{n=0}^{d'-1}R \otimes_R E^o_m(k)\simeq \oplus_{n=0}^{d'-1}E^o_m(k)$, the composition of \eqref{eq:induced11} with \eqref{eq:induced-22} corresponds to the $R$-module homomorphisms:
\begin{eqnarray*}
\big((1, t, \ldots, t^{d'-1}) \colon \oplus_{n=0}^{d'-1} R \too R[t]/\langle h'_A(t)^{-1}\rangle\big)\otimes_R E^o_m(k) && m \in \bbZ\,.
\end{eqnarray*}
By assumption, we have $1/\beta' \in R$. Therefore, the factorization algorithm for polynomials applied to $R[t]$ allows us to conclude that the $R$-module homomorphism $(1, t, \ldots, t^{d'-1})$ is invertible. This implies that the induced $R$-module homomorphisms \eqref{eq:induced11} are also invertible, and so the proof of Theorem \ref{thm:main}(ii) is finished.

\medbreak\noindent\textbf{Acknowledgments:} The author is grateful to Michael Artin for useful discussions concerning noncommutative projective schemes and also to Theo Raedschelders for important comments on a previous version of this note.

\end{document}

\end{proof}